\newcommand{\mc}{\mathcal}
\newcommand{\m}[1]{{\bf{#1}}}
\newcommand{\tr}{^{\sf T}}
\newcommand{\g}[1]{\mbox{\boldmath $#1$}}
\newtheorem{rem}{{\sc Remark}}
\crefname{rem}{Remark}{Remarks}
\numberwithin{rem}{section}
\newtheorem{cor}{{\sc Corollary}}
\crefname{cor}{Corollary}{Corollaries}
\numberwithin{cor}{section}
\newtheorem{lem}{{\sc Lemma}}
\crefname{lem}{Lemma}{Lemmas}
\numberwithin{lem}{section}
\newtheorem{assumption}{{\sc Assumption}}
\crefname{assumption}{Assumption}{Assumptions}
\numberwithin{rem}{section}
\newcommand{\GP}{{GP}}
\newcommand{\NLCO}{{NLCO}}
\newcommand{\NLCOCG}{{NLCO-CG}}
\newcommand{\NPASA}{{NPASA}}
\newcommand{\NPASACG}{{NPASA-CG}}
\newcommand{\PASA}{{PASA}}
\newcommand{\IPOPT}{{IPOPT}}
\newcommand{\PRIMME}{{PRIMME}}
\newcommand{\TheTitle}{A Newton-type Active Set Method for Nonlinear Optimization with Polyhedral Constraints}
\newcommand{\TheTitleShort}{A Newton Active Set Method}
\newcommand{\TheAuthors}{Hager and Tarzanagh}
\headers{\TheTitleShort}{\TheAuthors}
\title{{\TheTitle}}
\author{
WILLIAM W. HAGER \thanks{{\tt hager@ufl.edu},
        http://people.clas.ufl.edu/hager/,
        PO Box 118105,
        Department of Mathematics,
        University of Florida, Gainesville, FL 32611-8105.
        Phone (352) 294-2308. Fax (352) 392-8357.}
\and
DAVOUD ATAEE TARZANAGH  \thanks{{\tt tarzanagh@ufl.edu},
        http://people.clas.ufl.edu/tarzanagh/,
        PO Box 118105,
        Department of Mathematics,
        University of Florida, Gainesville, FL 32611-8105.
}
}
\begin{document}

\maketitle

% REQUIRED not to exceed 250 words
\begin{abstract}
A Newton-type active set algorithm for large-scale minimization subject to polyhedral constraints is proposed. The algorithm consists of a gradient projection step, a second-order Newton-type step in the null space of the constraint matrix, and a set of rules for branching between the two steps. We show that the proposed method asymptotically takes the Newton step when the active constraints are linearly independent and a strong second-order sufficient optimality condition holds. We also show that the method has a quadratic rate of convergence under standard conditions. Numerical experiments are presented illustrating the performance of the algorithm on the CUTEst and on a specific class of problems for which finding second order stationary points is critical.
\end{abstract}
%\begin{abstract}
%A Newton-type active set algorithm for large-scale minimization subject to polyhedral constraints is proposed. The algorithm consists of a gradient projection step, a modified Newton step in the null space of the constraint matrix, and a set of rules for branching between the two steps. We show that the proposed method asymptotically takes the Newton step when the active constraints are linearly independent and a strong second-order sufficient optimality condition hold. We also show that the method has a quadratic rate of convergence under standard conditions. Numerical experiments are presented illustrating the performance of the algorithm on the CUTEst and on a specific class of problems for which finding second order stationary points is critical.
%\end{abstract}

% REQUIRED
\begin{keywords}
polyhedral constrained minimization, active set methods, gradient projection, Newton methods, linesearch methods
\end{keywords}

% REQUIRED
\begin{AMS}
49M05, 90C06
\end{AMS}

\section{Introduction}
We consider the polyhedral constrained optimization problem
\begin{equation} \label{P}
\min \; \{ f (\m{x}) : \m{x} \in \Omega \}, \quad
\mbox{where} \quad \Omega = \{ \m{x} \in \mathbb{R}^n: \m{Ax} \le \m{b} \}.
\end{equation}
Here, $f$ is a real-valued, twice continuously differentiable function, $\m{A} \in \mathbb{R}^{m \times n}$, $\m{b} \in \mathbb{R}^m$, and $\Omega$ is assumed to be nonempty. We are interested in the case when $n$ and possibly $m$ are large and when second derivatives of $f$ are available.

Optimization problems of the form \eqref{P} arise in many different fields such as machine learning, optimal control, and finance \cite{boyd2011distributed,nocedal2006numerical,stellato2020osqp}. Applications in machine learning include support vector machines \cite{cortes1995support}, constrained neural networks training \cite{chorowski2014learning}, generalized tensor factorization \cite{hong2020generalized,tarzanagh2019regularized}, and estimation of graphical models \cite{banerjee2008model,tarzanagh2018estimation}. In the field of optimal control, moving horizon estimation \cite{allgower1999nonlinear} and model predictive control \cite{garcia1989model} approaches require the solution of a polyhedral constrained problem at each timestep. Financial applications of \eqref{P} include portfolio optimization \cite{cornuejols2006optimization,marko52}. 

Active set methods for polyhedral constrained optimization has been studied quite extensively over the years \cite{fletcher2002nonlinear,nocedal2006numerical}. It relies on a profound theoretical foundation and provides powerful algorithmic tools for the solution of large-scale technologically relevant problems. In recent years, active set Newton methods have proven to be quite effective for solving problems with thousands of variables and constraints but are likely to become very expensive as the problems they are asked to solve become larger \cite{nocedal2006numerical,byrd2003algorithm}. Indeed, the principal cost of these methods is often dominated by that of solving the quadratic programming subproblems, and in the absence of a truncating scheme, this cost can become prohibitive. These concerns have motivated us to look for a different active-set approach.

%In this paper, we introduced a second order variation of the active set algorithm (\ASA{}) \cite{hager2006new} and the polyhedral \ASA{} (\PASA{}) \cite{hager2016active} with guaranteed convergence to second order stationary points under some mild assumptions. 

In this paper, we propose a Newton-type polyhedral active set algorithm (\NPASA{}) which consists of a gradient projection (\GP{}) phase and a second order Newton-type linearly constrained optimizer (\NLCO{}) phase that solves a sequence of equality-constrained quadratic programming over the faces of the polyhedron $\Omega$. The algorithm does not require the solution of a general quadratic program at each iteration. We show that \NPASA{} asymptotically executes \NLCO{} when the active constraints are linearly independent and a strong second-order sufficient optimality condition holds. We also show that \NPASA{} has a quadratic rate of convergence under standard assumptions.

{\bf Related work.}
This work is related to a broad range of literature on constrained optimization.
Some of the initial work focused on
the use of the conjugate gradient method with bound constraints as in \cite{polyak1969conjugate, more1991solution,dostal1997box, dostal2003proportioning,dostal2003augmented, dostal2005minimizing,yang1991class}. Work on gradient projection methods include \cite{yang1991class, bertsekas1974goldstein, calamai1987projected,goldstein1964convex, mccormick1972gradient, schwartz1997family}. Convergence is accelerated using Newton and trust region methods \cite{conn2000trust}. Superlinear and quadratic convergence for nondegenerate problems can be found in \cite{bertsekas1982projected,burke1990convergence, conn1988global, facchinei1998active,forsgren1997newton}, while analogous convergence results are given in \cite{facchinei2002truncated, friedlander1994new, lescrenier1991convergence, lin1999newton}, even for degenerate problems.

This work is also related to a substantial body of literature on active set methods for quadratic programming problem \cite{forsgren2015active,gill2015methods,izmailov2014newton,friedlander2008global}. A treatment of active set methods in a rather general setting is given in \cite{izmailov2014newton}. Most directly related to our work is \cite{goldberg2015active} where a very efficient two-phase active set method for conic-constrained quadratic programming is proposed. As in \cite{hager2006new,hager2016active,friedlander2008global,zhang2020smoothing} the first phase of the conic active-set method \cite{goldberg2015active} is the GP method, while the second phase is the Newton method in the reduced subspace defined by the free variables and the boundary of the active cones. Although this method uses the second-order information, the convergence to second-order stationary points was not established. Further, it remains unclear whether this method asymptotically takes the Newton step or not.

\textbf{Outline.} Section~\ref{sec:2} gives a detailed statement of the proposed active set algorithm, while Section~\ref{sec:3} establishes its global convergence. Section~\ref{sec:4} shows that asymptotically \NPASA{} performs only the Newton step when the active constraint gradients are linearly independent and a strong second-order sufficient optimality condition holds. Section~\ref{sec:results} provides some numerical results. Finally, Section~\ref{sec:conclusions} concludes the paper.

{\bf Notations.}
For any set $\mc{S}$, $|\mc{S}|$ stands for the number of elements (cardinality) of $\mc{S}$, while $\mc{S}^c$ is the complement of $\mc{S}$.  The subscript $k$ is often used to denote the iteration number in an algorithm, while $\m{x}_{ki}$ stands for the $i$-th component of the iterate $\m{x}_k$. The function value, gradient and Hessian at point $\m{x}_k$ is denoted by $f_k = f(\m{x}_k)$, $\m{g}_k = \m{g}(\m{x}_k)$, and $\m{H}_k = \m{H}(\m{x}_k)$, respectively. For a symmetric matrix $\m{M}$, we use the notation $\sigma_{\min} (\m{M}) \geq 0$ for $\m{M}$ positive semidefinite and $\sigma_{\min} (\m{M} ) > 0$ for $\m{M}$ positive definite. $\m{I}_n$ denotes the identity matrix of size $n$. The ball with center $\m{x}$ and radius $r$ is denoted $\mc{B}_{r}(\m{x})$. For any matrix $\m{M}$, $\mc{N}(\m{M})$ is the null space. If $\mc{S}$ is a subset of the row indices of $\m{M}$,
then ${M}_{\mc{S}}$ denotes the submatrix of $\m{M}$ with row indices $\mc{S}$.
For any vector $\m{b}$, $\m{b}_{\mc{S}}$
is the subvector of $\m{b}$ with indices  $\mc{S}$.
$\mc{P}_\Omega (\m{x})$ denotes the Euclidean projection of $\m{x}$ onto $\Omega$:
\begin{equation} \label{proj}
\mc{P}_{\Omega} (\m{x}) =
\arg  \; \min \{ \|\m{x} - \m{y}\| : \m{y} \in \Omega \} .
\end{equation}
For any $\m{x} \in \Omega$, the active and free index sets are
defined by
\[
\mc{A}(\m{x}) = \{ i: (\m{Ax} - \m{b})_i = 0\} \quad \mbox{and} \quad
\mc{F}(\m{x}) = \{ i: (\m{Ax} - \m{b})_i < 0\},
\]
respectively.

\section{Algorithm structure}\label{sec:2}

In the proposed active set method, it takes either the iteration of the \GP{} algorithm \cite{calamai1987projected} or the iteration of the \NLCO{} based on some switching rules. Algorithm~\ref{alg:pga} is a simple monotone \GP{} algorithm using an Armijo-type linesearch \cite{armijo1966minimization}. Better numerical performance is achieved with a more general nonmonotone linesearch such as that given in \cite{birgin2000nonmonotone}, and all the analysis directly extends to this more general framework.

\begin{algorithm}[t]
\caption{ Gradient Projection (\GP{})}
\label{alg:pga}
    \begin{algorithmic}
            \STATE \textbf{Parameters}: $\delta_1$ and $\eta \in (0, 1)$.
\FOR{$k=1, 2 , \ldots $}
\STATE Set $\m{d}_k= -  \nabla f(\m{x}_k)$ and update
\begin{eqnarray}\label{xx-def}
\m{x}_{k+1} &=& \mc{P}_\Omega (\m{x}_k +s_k \m{d}_k),
\end{eqnarray}				
where $s_k=\max\{\eta^0, \eta^1, \ldots \}$ is chosen such that
\begin{equation}\label{SuffDecrCond1}
f(\m{x}_{k+1}) \leq f(\m{x}_{k})+\delta_1 \langle \nabla f(\m{x}_{k}), \m{x}_{k+1}-\m{x}_k \rangle ;
\end{equation}			
 \ENDFOR
    \end{algorithmic}
\end{algorithm}

Let $\m{x}_k \in \Omega $ be the current iterate and assume the \NLCO{} be chosen to get $\m{x}_{k+1}$. Then, the \NLCO{} solves the linearly constrained problem
\begin{equation}\label{P-face}
\min_{} \; \{ f (\m{y}) :   \m{y} \in \widehat{\Omega}(\m{x}_k)\}, 
\end{equation}
where 
\begin{align}\label{eqn:face:set}
\widehat{\Omega}(\m{x}_k)= \big\{ \m{y}:  (\m{Ay} - \m{b})_i = 0
\mbox{ for all } i \in \mc{A}(\m{x}_k) \big\}.
\end{align}
%Compared to the original problem \eqref{P}, there are
%usually many more equality constraints in \eqref{P-face} which may lead the efficiency of the \NLCO{}. 
%
%is equivalent to the problem
%\begin{equation} \label{P-null}
%\min \; \{ f (\m{x}_k+ \m{d}) :  \m{d} \in \mc{N}(\m{A}_\mc{I})\}.
%\end{equation}

A common approach for solving \eqref{P-face} is to eliminate the constraints and solve a \textit{reduced} problem~\cite{fletcher2002nonlinear,nocedal2006numerical}. More specifically, let  $\m{Z}_{\m{A}_{\mc{I}}}$ be a matrix whose columns are mutually orthogonal and span the null space $\mc{N}(\m{A}_\mc{I})$. Then, $\m{Z}_{\m{A}_{\mc{I}}}$ has dimension $n \times (n-|\mc{I}|)$ and is such that 
\begin{equation}\label{eqn:zprop}
\m{Z}_{\m{A}_{\mc{I}}}^\top \m{Z}_{\m{A}_{\mc{I}}}=\m{I}_{n-|\mc{I}|}, \qquad   \m{A}_{\mc{I}} \m{Z}_{\m{A}_{\mc{I}}} =\m{0}. 
\end{equation}

Under this notation and for $\mc{I}=\mc{A}(\m{x}_k)$, the problem defined at \eqref{P-face} can be redefined as the reduced problem
\begin{equation} \label{P-uncon}
\min \; \{ f (\m{x}_k+ \m{Z}_{\m{A}_{\mc{I}}} \m{p}) :  \m{p} \in  \mathbb{R}^{n-|\mc{I}|}\}. 
\end{equation}
This is now an unconstrained problem in the $(n-|\mc{I}|) \times 1 $ vector $\m{p}$ and the methods for the unconstrained problem can be applied. When evaluating first and second derivatives with respect to the variable $\m{p}$ we see that the gradient vector is given by $ \m{Z}_{\m{A}_{\mc{I}}}^\top \m{g}(\m{x}_k)$. This vector will be defined as the reduced gradient and denoted by $ \m{g}^{\mc{I}}(\m{x}_k)$. We also let  $\m{g}^{\mc{A}}(\m{x}_k)$ denote $ \m{g}^{\mc{I}}(\m{x}_k)$ for $\mc{I} = \mc{A}(\m{x}_k)$. Similarly, the $(n-|\mc{I}|)\times(n-|\mc{I}|)$ Hessian matrix with respect to $\m{p}$ is given by $\m{Z}_{\m{A}_{\mc{I}}}^\top \m{H}(\m{x}_k) \m{Z}_{\m{A}_{\mc{I}}}$. This will be defined as the reduced Hessian matrix at $\m{x}_k$ and written as $\m{H}^{\mc{I}} (\m{x}_k)$. We also use $\m{H}^{\mc{A}}(\m{x}_k)$ to denote  $\m{H}^{\mc{I}} (\m{x}_k)$ for  $\mc{I} = \mc{A}(\m{x}_k)$.  Now, it is possible to construct a local quadratic model for \eqref{P-uncon} from the corresponding Taylor series expansion of $f$ at $\m{x}_k$ as 
\begin{equation} \label{P-quad}
 \min \; \{ Q(\m{p}):  \m{p} \in  \mathbb{R}^{n-|\mc{I}|}\},
\end{equation}
where 
\begin{equation*}% \label{P-quad}
Q(\m{p}):= \m{p}^{\top}  \m{g}^{\mc{I}}(\m{x}_k) +  \m{p}^{\top} \m{H}^{\mc{I}} (\m{x}_k)\m{p}.
\end{equation*}

Having characterized the reduced problem~\eqref{P-uncon} and its local quadratic model \eqref{P-quad}, we now describe the proposed \NLCO{} algorithm. We use a standard line-search framework \cite{nocedal2006numerical}.  Given the current iterate $\m{x}_k \in \Omega$, the new iterate is obtained as
\begin{equation}\label{eqn:iterate}
\m{x}_{k+1} = \m{x}_k + s_k  \m{d}_k,
\end{equation}
where $\m{d}_k= \m{Z}_{\m{A}_{\mc{I}}} \m{p}_k$  is a chosen search direction and $s_k>0$ is a stepsize computed by a backtracking linesearch procedure.

In Algorithm~\ref{algo:nlco}, we provide a general framework for solving problem \eqref{P-uncon}. This algorithm alternates between (perturbed) negative curvature and Newton-like steps using the minimum eigenvalue of the reduced Hessian matrix. (Either can be taken first; arbitrarily, we state our algorithm and analysis assuming that one starts with a perturbed negative curvature step.) At a given iterate $\m{x}_k \in \Omega$, let $\sigma_k$ denote the left-most eigenvalue of the reduced Hessian matrix $\m{H}^{\mc{A}} (\m{x}_k)$. If $\sigma_k < -\epsilon_H $ for some $ \epsilon_H \in [0, \infty)$, a curvature direction $\m{u}_k$ is computed such that
\begin{align}\label{eq:negcurv}
    \m{u}_k^\top \m{H}^{\mc{A}}(\m{x}_k)  \m{u}_k  < 0,  \qquad  
   \m{u}_k^\top \m{g}^{\mc{A}} (\m{x}_k)  \leq 0,  \qquad \|\m{u}_k\| = |\sigma_k|.
\end{align}
Then, the algorithm sets $\m{p}_k = \m{p}^{c}_k $, where $\m{p}^{c}_k $ is a combination of the negative gradient direction with the negative curvature direction; that is, 
\begin{align}\label{eq:neggrad}
\m{p}^{c}_k = \m{u}_k -\alpha \m{g}^{\mc{A}}(\m{x}_k)
\end{align} 
 for some $\alpha \in (0,\infty)$.  
 
If $|\sigma_k| \leq \epsilon_H$, the algorithm sets $\m{p}_k = \m{p}^{r}_k$,  where $\m{p}^{r}_k$ is obtained by solving the regularized Newton system
\begin{equation}\label{eq:regnewton}
 \big(\m{H}^{\mc{A}}(\m{x}_k)+(|\sigma_k|+ \epsilon_R) \m{I}\big) \m{p}^{r}_k = -\m{g}^{\mc{A}}(\m{x}_k)
\end{equation}
for some $\epsilon_R \in (0, \infty)$.  

If $\m{p}_k$ has not yet been chosen, the algorithm sets $\m{p}_k = \m{p}^{n}_k$, where $\m{p}^{n}_k$ is an exact minimizer of the subproblem \eqref{P-quad} and can be obtained by solving the Newton system
\begin{equation}\label{eq:newton}
\m{H}^{\mc{A}}(\m{x}_k) \m{p}^{n}_k = -\m{g}^{\mc{A}}(\m{x}_k).
\end{equation}

\begin{algorithm}[t]
\caption{Newton-type Linearly Constrained Optimizer (\NLCO)}\label{algo:nlco}
\begin{algorithmic}%[1]
\STATE \textbf{Parameters}: $  \epsilon_H \in [0, \infty);  \epsilon_R, \alpha \in (0, \infty)$; $ \delta_1 \in (0, 1)$; and $\delta_2 \in [\delta_1, 1)$.
\FOR{$k=1, 2 , \ldots $}
\STATE  $\m{x}_k \in \Omega$ and $\mc{I} \leftarrow \mc{A}(\m{x}_k)$;
\STATE Evaluate the reduced gradient $\m{g}^{\mc{I}}(\m{x}_k) $ and the reduced Hessian $\m{H}^{\mc{I}} (\m{x}_k)$;
\STATE Compute $\sigma_k$ as the minimum eigenvalue of $\m{H}^{\mc{I}}(\m{x}_k)$;
\IF{$\sigma_{k} < - \epsilon_H$} 
\STATE Set $\m{p}_k = \m{p}^{c}_k$, where $\m{p}^c_k$ is defined by \eqref{eq:neggrad}; 
\ELSIF{$|\sigma_k| \leq \epsilon_H$} 
\STATE Set $\m{p}_k = \m{p}^{r}_k$, where $\m{p}^{r}_k$ is obtained by solving \eqref{eq:regnewton}; 
\ELSE
\STATE Set $\m{p}_k = \m{p}^{n}_k$, where $\m{p}^{n}_k$ is obtained by solving \eqref{eq:newton};
\ENDIF
\STATE Set $\m{d}_k= \m{Z}_{\m{A}_{\mc{I}}} \m{p}_k$;
%\STATE Let $ \phi_k(s) = f(\m{x}_k+ s \m{d}_k)$ and $ \psi_k(s)= \phi_k^{'}(0)+ \frac{1}{2} \min \{ \phi_k^{''}(0) , 0\} s.$
\STATE Find the stepsize $s_k$ such that \eqref{eq:line1} and \eqref{eq:line2} hold;
\STATE Update $\m{x}_{k+1} =\m{x}_k+ s_k \m{d}_k$;
\ENDFOR
\end{algorithmic}
\end{algorithm}

Once a search direction has been selected, a backtracking linesearch is applied to determine $s_k$. We follow \cite{mccormick1977modification,more1979use,goldfarb1980curvilinear} in the linesearch and adapt it to cope with polyhedral constraints. For the sake of completion, the linesearch is reviewed here, and the properties that are subsequently required for the linearly constrained case are given in Lemmas \ref{lem:sk} and \ref{thm:glob:face}. 

Let  
\begin{equation}\label{eq:linf}
 \phi_k(s) := f(\m{x}_k+ s \m{d}_k) \quad \text{and} \quad  \psi_k(s):= \phi_k^{'}(0)+ \frac{1}{2} \min \{ \phi_k^{''}(0) , 0\} s.
\end{equation}
The stepsize $s_k$ in \eqref{eqn:iterate} must provide a sufficient reduction in the objective function according to
\begin{subequations}%\label{eq:line}
\begin{align}\label{eq:line1}
 \phi_k(s_k) &\leq  \phi_k(0)+ \delta_1 \psi_k(s_k)s_k 
\end{align}
and a sufficient reduction in curvature by
\begin{align}\label{eq:line2}
| \phi_k^{'}(s_k)| &\leq  \delta_2 |\psi_k(s_k)|,
\end{align}
\end{subequations}
where $\delta_1 \in (0, 1)$ and $\delta_2 \in [\delta_1, 1)$.  The linesearch is designed to give $ \lim_{k \to \infty}\phi_k^{'}(0)=0 $ and $\liminf_{k\to\infty} \phi_k^{''}(0)\geq0$. Mor\'{e} and Sorensen \cite[Lemma 5.2]{more1979use} provided a proof for the existence of a stepsize satisfying \eqref{eq:line1} and \eqref{eq:line2}. For completeness, the lemma is reproduced here and the proof is provided in Appendix.
\begin{lemma}\label{lem:sk}
Let $\phi : \mathbb{R}\rightarrow \mathbb{R}$ be twice continuously differentiable in an open interval $S$ that contains the origin, and suppose that $\{s \in S : \phi(s) \leq \phi(0)\}$ is compact. Let $\delta_1 \in (0, 1)$ and $\delta_2 \in [\delta_1, 1)$. If $\phi^{'}(0) <0 $, or if  $\phi^{'}(0) \leq 0$ and $\phi^{''}(0) < 0$, then there is an $s >0$ in $S$ such that \eqref{eq:line1} and \eqref{eq:line2} hold.
\end{lemma}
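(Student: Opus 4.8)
The plan is to adapt the classical existence argument for Wolfe-type steps (as in Mor\'e and Sorensen) to the curvature-aware model $\psi$ of \eqref{eq:linf}. First I would record the elementary sign facts that drive everything: under either hypothesis one has $\psi(s)<0$ for every $s>0$, since $\psi(0)=\phi'(0)\le 0$ and, when $\phi''(0)<0$, the term $\tfrac12\phi''(0)s$ makes $\psi$ strictly decreasing. Consequently the sufficient-decrease threshold $\phi(0)+\delta_1\psi(s)s$ is strictly below $\phi(0)$ for $s>0$, so any $s$ obeying \eqref{eq:line1} automatically satisfies $\phi(s)<\phi(0)$ and hence lies in the compact sublevel set $\{s\in S:\phi(s)\le\phi(0)\}$.

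Next I would show \eqref{eq:line1} holds for all small $s>0$ and then locate a first crossing. A second-order Taylor expansion gives $\phi(s)-\phi(0)-\delta_1\psi(s)s=(1-\delta_1)\phi'(0)\,s+\tfrac12\big(\phi''(0)-\delta_1\min\{\phi''(0),0\}\big)s^2+o(s^2)$; when $\phi'(0)<0$ the linear term is negative, and when $\phi'(0)=0$ and $\phi''(0)<0$ the quadratic coefficient equals $\tfrac12(1-\delta_1)\phi''(0)<0$, so in either case \eqref{eq:line1} holds strictly for small $s>0$. Since the sublevel set is compact it has a right endpoint $s_+\in S$, and just beyond $s_+$ one has $\phi(s)>\phi(0)>\phi(0)+\delta_1\psi(s)s$, violating \eqref{eq:line1}. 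Continuity and the intermediate value theorem then yield a smallest $\bar s>0$ with $\phi(\bar s)=\phi(0)+\delta_1\psi(\bar s)\bar s$ and strict inequality in \eqref{eq:line1} on $(0,\bar s)$.

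The curvature condition \eqref{eq:line2} I would extract by a mean value argument on $[0,\bar s]$: there is $s^\ast\in(0,\bar s)$ with $\phi'(s^\ast)=\big(\phi(\bar s)-\phi(0)\big)/\bar s=\delta_1\psi(\bar s)$, and \eqref{eq:line1} holds at $s^\ast$ because $s^\ast<\bar s$. When $\phi''(0)\ge0$ (which forces $\phi'(0)<0$) the model slope $\psi\equiv\phi'(0)$ is constant, so $|\phi'(s^\ast)|=\delta_1|\phi'(0)|\le\delta_2|\psi(s^\ast)|$ and \eqref{eq:line2} follows at once from $\delta_1\le\delta_2$.

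The main obstacle is the negative-curvature case $\phi''(0)<0$, where $\psi$ is strictly decreasing and $\psi(\bar s)\ne\psi(s^\ast)$. Here the mean value point only gives $|\phi'(s^\ast)|=\delta_1|\psi(\bar s)|\ge\delta_1|\psi(s^\ast)|$, so closing the target bound $|\phi'(s^\ast)|\le\delta_2|\psi(s^\ast)|$ requires controlling the ratio $|\psi(\bar s)|/|\psi(s^\ast)|$ through the monotonicity of $\psi$ together with the gap $\delta_2-\delta_1$, and if necessary refining the choice of $s^\ast$ toward $\bar s$ or shrinking the admissible interval. This coupling of the two conditions through a non-constant model slope, absent in the purely linear Wolfe setting, is the technical heart of the argument; by comparison the sufficient-decrease and compactness steps are routine.
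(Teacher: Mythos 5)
Your set-up (the sign facts for $\psi$, sufficient decrease for small $s>0$ via Taylor expansion, and the first crossing $\bar s$ of the $\delta_1$-line) is sound and close in spirit to the paper's construction, which instead starts from the right endpoint $t=\sup\{s\in S:\phi(s)\le\phi(0)\}$ of the compact sublevel set, where $\phi(t)=\phi(0)\ge\phi(0)+\delta_1\psi(t)t$. The gap is exactly where you flag it, and it is not a deferrable technicality: applying the mean value theorem to $\phi$ on $[0,\bar s]$ produces a point $s^\ast$ with $\phi'(s^\ast)=\delta_1\psi(\bar s)$, but $s^\ast$ is not under your control, and when $\phi'(0)=0$ and $\phi''(0)<0$ the ratio $|\psi(\bar s)|/|\psi(s^\ast)|=\bar s/s^\ast$ is unbounded as $s^\ast\downarrow 0$. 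No amount of exploiting the gap $\delta_2-\delta_1$ or the monotonicity of $\psi$ can then deliver $|\phi'(s^\ast)|\le\delta_2|\psi(s^\ast)|$, and ``refining the choice of $s^\ast$ toward $\bar s$'' is not available because the mean value theorem hands you a single point, not a family.

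The missing idea---which is how the paper, following Mor\'e and Sorensen, closes the argument---is to apply Rolle's theorem not to $\phi$ but to the auxiliary function $h(s)=\phi(s)-\phi(0)-\delta_2\,\psi(s)s$. One checks that $h(0)=0$, that $h<0$ on an initial interval (your Taylor computation with $\delta_1$ replaced by $\delta_2<1$), and that $h(t)\ge 0$ at the endpoint $t$ of the sublevel set because $\psi(t)t\le0$ there; continuity then yields a first $\bar t\in(0,t]$ with $h(\bar t)=0$ and $h<0$ on $(0,\bar t)$, and Rolle gives $s\in(0,\bar t)$ with $h'(s)=0$, i.e.\ $\phi'(s)=\delta_2\,\frac{d}{ds}\bigl[\psi(s)s\bigr]$, an identity holding at the very point where $h(s)<0$ and $\delta_1\le\delta_2$ also certify \eqref{eq:line1}. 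This coupling of the curvature condition into the function to which Rolle is applied is the device your plan lacks; without it your route does not reach \eqref{eq:line2} in the negative-curvature case. (A side caution if you carry this out: $\frac{d}{ds}[\psi(s)s]=\phi'(0)+\min\{\phi''(0),0\}s$ differs from $\psi(s)$ by a factor of two in the curvature term, so relating $|\phi'(s)|$ to $\delta_2|\psi(s)|$ still requires a short additional comparison that the paper's appendix passes over in silence.)
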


The implementation of \NLCO{} involves several decisions concerning practical details; we describe some of them in the following paragraphs:
\begin{itemize}
\item[--] Negative curvature direction $\m{u}_k$ can be obtained at a relatively small cost. It can be obtained by executing the power iteration method on the reduced Hessian matrix $\m{H}^{\mc{A}}(\m{x}_k)$ to obtain an eigenvector corresponding to the leftmost eigenvalue $\sigma_k$ so that \eqref{eq:negcurv} holds. More efficient and robust methods using variants of Lanzos algorithm to compute the algebraically smallest eigenpair can be found in \cite{stathopoulos2010primme}. It is worth mentioning that at most one eigenvector computation and one linear system solving are needed per iteration of Algorithm~\ref{algo:nlco}, along with a reduced gradient evaluation and the reduced Hessian-vector multiplication. 
\item[--] In Algorithm~\ref{algo:nlco}, we require that
\begin{equation}\label{eqn:act:seq}
 \mc{A}(\m{x}_{k}) \subseteq \mc{A}(\m{x}_{k+1}) \qquad \textnormal{for all} \quad k =1, 2, \ldots~.
\end{equation}
This condition is easily fulfilled by our algorithm which adds constraints to the active set whenever a new constraint becomes active.
%\item[--] The working-set matrix $\m{A}_\mc{I}$ for $\mc{I}=\mc{A}(\m{x}_k)$  is required to have full row rank. A straightforward way to ensure this property is to add at most one constraint at every iteration \cite[Lemma 2.1]{gill1991inertia}.
\end{itemize}

\begin{algorithm}[t]
\caption{Newton-type Polyhedral Active Set Algorithm (\NPASA{})}\label{algo:npasa}
\begin{algorithmic}%[1]
\STATE \textbf{Parameters}: $\epsilon_E,  \epsilon_H \in [0, \infty);  \epsilon_R, \alpha \in (0, \infty)$; $\eta,\theta, \mu, \delta_1 \in (0, 1)$; and $\delta_2 \in [\delta_1, 1)$.
\STATE Choose $\m{x}_0 \in \mathbb{R}^n$.
\STATE Set $k=0$ and  $\m{x}_{k+1} = \mc{P}_\Omega (\m{x}_k)$.
\STATE \textbf{Phase~1 (First Order)}:
\WHILE{ $E(\m{x}_{k}) > \epsilon_E$ }
\STATE Execute the GP step to obtain $\m{x}_{k+1}$ from $\m{x}_k$;
\STATE  Set $k \leftarrow k+1$;
\IF{$e(\m{x}_{k}) \leq  \theta E(\m{x}_{k})$} 
\STATE Set $\theta \leftarrow \mu \theta$;
\ENDIF
\IF{$e(\m{x}_{k}) > \theta E(\m{x}_{k})$}
\STATE Go to Phase~2;
\ENDIF
\ENDWHILE
\STATE \textbf{Phase~2 (Second Order)}:
\WHILE{ $E(\m{x}_{k}) > \epsilon_E$ }
\STATE Execute the NLCO step to obtain $\m{x}_{k+1}$ from $\m{x}_k$; 
\STATE Set $k \leftarrow k+1 $;
\IF{$e(\m{x}_{k}) \leq \theta E(\m{x}_{k})$} 
\STATE Set $\theta \leftarrow \mu \theta$;
\STATE Go to Phase~1;
\ENDIF
\ENDWHILE
\end{algorithmic}
\end{algorithm}

Following \cite{hager2006new,hager2016active,zhang2020smoothing}, we next provide some rules for branching between \GP{} and \NLCO{} steps in our proposed active set algorithm, \NPASA{}. To do so, we define $
 e(\m{x}) = \|\m{g}^\mc{A} (\m{x})\|$ as a local measure of stationarity in the sense that it vanishes if and only if $\m{x}$ is a stationary point on its associated face $$\widehat{\Omega}(\m{x}) =\big\{ \m{y}:  (\m{Ay} - \m{b})_i = 0
\mbox{ for all } i \in \mc{A}(\m{x}) \big\}.$$

Let $\nabla_{\Omega} f(\m{x})$ denote the projected gradient of $f$ at a point $\m{x} \in \Omega$ \cite{calamai1987projected}:
\begin{equation} \label{pg_grad_minprob}
\nabla_{\Omega} f(\m{x}) :=\mc{P}_{\mc{T}(\m{x})} [- \m{g}(\m{x})]= \arg\min \left\{ \Vert \m{d} + \m{g}(\m{x})\Vert
\quad \mbox{ s.t.} \quad \m{d} \in \mc{T}(\m{x}) \right\}.
\end{equation}
Here, $\mc{T}(\m{x})$ is the tangent cone to $\Omega$ at $\m{x}$, i.e.,   
\begin{equation}\label{eqn:swi5}
\mc{T}(\m{x}) = \Big\{ \m{d}~:~\m{a}_i^\top \m{d} \leq 0 \mbox{ for all } i  \in \mc{A}(\m{x})\Big\}.
\end{equation}
The projected gradient $\nabla_{\Omega} f(\m{x})$ can be used to characterize stationary points because if $\Omega$ is a convex set, then $\m{x} \in \Omega$ is a stationary point of problem \eqref{P} if and only if $\nabla_{\Omega} f(\m{x})=0$. We monitor the convergence to a stationary point using the function $E$ defined by
\[
E(\m{x}) = \|\nabla_{\Omega} f(\m{x}) \|.
\]

As shown in \cite{calamai1987projected}, the limit points of a bounded sequence $\{ \m{x}_k \}$ generated by any \GP{} algorithm are stationary, and
\begin{equation*} %\label{gptozero}
\lim_{k \rightarrow \infty} E(\m{x}_k)= 0,
\end{equation*}
provided the steplengths are bounded and satisfy suitable sufficient decrease conditions. Further, if an algorithm is able to drive the projected gradient toward
zero, then it is able to identify the active variables that are nondegenerate at the solution within finite number of iterations. This is essential in providing the quadratic convergence result of our proposed active set algorithm.

The rules for switching between phase one and phase two depend on the relative size of the stationarity measures $E$ and $e$. We choose a parameter $\theta \in (0, 1)$ and branch from phase one to phase two when $e(\m{x}) \ge \theta E(\m{x})$. Similarly, we branch from phase two to phase one when $e(\m{x}) < \theta E(\m{x})$. To ensure that only phase two is executed asymptotically at a degenerate stationary point, we may need to decrease $\theta$ as the iterates converge.

Algorithm~\ref{algo:npasa} is the Newton-type polyhedral active set algorithm.
The parameter $\epsilon_E$ is the convergence tolerance, $\theta$ controls the branching between phase one and phase two, while $\mu$ controls the decay of $\theta$.

\section{Global convergence} \label{sec:3}

In this section, we establish the convergence properties of Algorithm~\ref{algo:npasa}. To do so, we make the following assumptions throughout.

\begin{assumption}\label{assu-a} 
The level set $\mc{L}_f(\m{x}_1) := \{ \m{x} \in \Omega : f (\m{x}) \le f(\m{x}_1) \} $ is compact.
\end{assumption}
\begin{assumption}\label{assu-b}
The function $f$ is twice continuously differentiable on an open neighborhood of $\mc{L}_f(\m{x}_1)$ that includes the trial points generated by the
algorithm. Further, $\nabla^2 f(\m{x})$ is bounded on this neighborhood.
\end{assumption}
%Note that Assumptions~\ref{assu-a} is made for simplicity of exposition; slightly weaker variants could be used at the expense of some complication in the analysis. 
\begin{rem}\label{rem:bound}
Under Assumptions~\ref{assu-a} and \ref{assu-b}, there exist finite scalars $f_{\textnormal{low}}$, $U_g > 0$, and $U_H > 0$ such that,  $f(\m{x}) \geq f_{\textnormal{low}}$, $\|\m{g}(\m{x})\| \leq U_g$,  and $\|\m{H}(\m{x})\| \leq U_H$ for all $\m{x} \in
\mc{L}_f(\m{x}_1)$. We observe that $U_H$ is a Lipschitz constant for the gradient $\m{g}(\m{x})$.
\end{rem}

The following technical lemma derives bounds on the norm of the search direction obtained from each type of step and shows that the search directions yield descent.
\begin{lemma}\label{lem:bound_dk}
Under Assumptions~\ref{assu-a} and \ref{assu-b}, there exist positive constants $c_1$ and $c_2$ such that, for all $k$,
\begin{align*}%\label{eq:boundonde}
\|\m{d}_k\| \leq c_1  \quad \textnormal{and} \quad   {\m{g}(\m{x}_k)}^\top  \m{d}_k \leq - c_2  \| \m{g}^{\mc{A}}(\m{x}_k)\|^2.
\end{align*}

\end{lemma}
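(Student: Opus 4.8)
The plan is to reduce both estimates to quantities in the reduced space and then treat each branch of Algorithm~\ref{algo:nlco} (together with the \GP{} step) separately. Since the columns of $\m{Z}_{\m{A}_{\mc{I}}}$ are orthonormal by \eqref{eqn:zprop}, for an \NLCO{} direction $\m{d}_k = \m{Z}_{\m{A}_{\mc{I}}}\m{p}_k$ we have $\|\m{d}_k\| = \|\m{p}_k\|$ and $\m{g}(\m{x}_k)^\top \m{d}_k = (\m{Z}_{\m{A}_{\mc{I}}}^\top \m{g}(\m{x}_k))^\top \m{p}_k = \m{g}^{\mc{A}}(\m{x}_k)^\top \m{p}_k$, so it suffices to bound $\|\m{p}_k\|$ from above and $\m{g}^{\mc{A}}(\m{x}_k)^\top \m{p}_k$ from above by $-c_2\|\m{g}^{\mc{A}}(\m{x}_k)\|^2$. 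I would first record the uniform spectral bounds that follow from Remark~\ref{rem:bound}: because $\m{H}^{\mc{A}}(\m{x}_k) = \m{Z}_{\m{A}_{\mc{I}}}^\top \m{H}(\m{x}_k)\m{Z}_{\m{A}_{\mc{I}}}$ and $\m{Z}_{\m{A}_{\mc{I}}}$ has orthonormal columns, $\|\m{H}^{\mc{A}}(\m{x}_k)\| \leq U_H$, so every eigenvalue of $\m{H}^{\mc{A}}(\m{x}_k)$, and in particular $\sigma_k$, lies in $[-U_H, U_H]$; likewise $\|\m{g}^{\mc{A}}(\m{x}_k)\| \leq \|\m{g}(\m{x}_k)\| \leq U_g$.

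Next I would dispatch the four possibilities. For the \GP{} direction $\m{d}_k = -\m{g}(\m{x}_k)$ one has $\m{g}(\m{x}_k)^\top\m{d}_k = -\|\m{g}(\m{x}_k)\|^2 \leq -\|\m{g}^{\mc{A}}(\m{x}_k)\|^2$ and $\|\m{d}_k\| \leq U_g$. In the negative-curvature branch \eqref{eq:neggrad}, $\m{p}_k = \m{u}_k - \alpha\m{g}^{\mc{A}}(\m{x}_k)$, so the triangle inequality with $\|\m{u}_k\| = |\sigma_k| \leq U_H$ from \eqref{eq:negcurv} gives $\|\m{p}_k\| \leq U_H + \alpha U_g$, while $\m{u}_k^\top \m{g}^{\mc{A}}(\m{x}_k) \leq 0$ yields $\m{g}^{\mc{A}}(\m{x}_k)^\top \m{p}_k \leq -\alpha\|\m{g}^{\mc{A}}(\m{x}_k)\|^2$. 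In the regularized-Newton branch \eqref{eq:regnewton}, the shifted matrix $\m{M}_k := \m{H}^{\mc{A}}(\m{x}_k) + (|\sigma_k| + \epsilon_R)\m{I}$ has eigenvalues $\lambda + |\sigma_k| + \epsilon_R \geq \sigma_k + |\sigma_k| + \epsilon_R \geq \epsilon_R > 0$ and at most $U_H + |\sigma_k| + \epsilon_R \leq 2U_H + \epsilon_R$, so $\m{M}_k \succ 0$ with $\|\m{M}_k^{-1}\| \leq 1/\epsilon_R$; hence $\|\m{p}_k\| = \|\m{M}_k^{-1}\m{g}^{\mc{A}}(\m{x}_k)\| \leq U_g/\epsilon_R$ and $\m{g}^{\mc{A}}(\m{x}_k)^\top\m{p}_k = -\m{g}^{\mc{A}}(\m{x}_k)^\top\m{M}_k^{-1}\m{g}^{\mc{A}}(\m{x}_k) \leq -\|\m{g}^{\mc{A}}(\m{x}_k)\|^2/(2U_H + \epsilon_R)$.

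The pure Newton branch \eqref{eq:newton} is where the only real difficulty lies. It is entered precisely when $\sigma_k > \epsilon_H$, so $\m{H}^{\mc{A}}(\m{x}_k)$ is positive definite with smallest eigenvalue exceeding $\epsilon_H$ and largest at most $U_H$; this gives $\|\m{p}_k\| = \|(\m{H}^{\mc{A}}(\m{x}_k))^{-1}\m{g}^{\mc{A}}(\m{x}_k)\| \leq U_g/\epsilon_H$ and $\m{g}^{\mc{A}}(\m{x}_k)^\top\m{p}_k \leq -\|\m{g}^{\mc{A}}(\m{x}_k)\|^2/U_H$. The norm bound here is the crux of the whole argument: it relies on a uniform positive lower bound for $\sigma_k$, which is supplied exactly by the branching threshold $\epsilon_H$, so this estimate requires $\epsilon_H > 0$ (when $\epsilon_H = 0$ one uses the convention that $\sigma_k = 0$ triggers the regularized step, so the inverse is never applied to an arbitrarily small eigenvalue). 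Collecting the four cases, the claim holds with $c_1 = \max\{U_g,\, U_H + \alpha U_g,\, U_g/\epsilon_R,\, U_g/\epsilon_H\}$ and $c_2 = \min\{1,\, \alpha,\, 1/(2U_H + \epsilon_R),\, 1/U_H\}$, both finite and positive under Assumptions~\ref{assu-a} and \ref{assu-b}.
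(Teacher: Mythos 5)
Your proposal is correct and follows essentially the same route as the paper's proof: a case analysis over the branches of Algorithm~\ref{algo:nlco}, using the orthonormality of $\m{Z}_{\m{A}_{\mc{I}}}$ from \eqref{eqn:zprop} and the uniform bounds $U_g$, $U_H$ of Remark~\ref{rem:bound} to control $\|\m{p}_k\|$ and $\m{g}^{\mc{A}}(\m{x}_k)^\top\m{p}_k$ branch by branch, arriving at the same constants. The only differences are cosmetic and in your favor: you additionally cover the \GP{} direction, you correctly take the \emph{minimum} for $c_2$ (the paper writes $\max$, evidently a typo), and you explicitly flag that the pure Newton bound $U_g/\epsilon_H$ degenerates when $\epsilon_H=0$, a subtlety the paper's proof glosses over.
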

\begin{proof}
%The first bound in~\eqref{eq:boundonde} trivially holds if $\m{d}_k=0$, thus we only need to prove that it holds for $\|\m{d}_k\|>0$. 
%
Let $\mc{I} =\mc{A}(\m{x}_k)$. We consider three disjoint cases:
\\
\textbf{Case 1}: $\m{d}_k = \m{Z}_{\m{A}_{\mc{I}}}\m{p}^c_k$. Then, it follows from \eqref{eq:neggrad} that
\begin{align}\label{eqn:up:negcur}
\nonumber
\|\m{d}_k\| &= \|\m{Z}_{\m{A}_{\mc{I}}}(\m{u}_k-\alpha  \m{Z}_{\m{A}_{\mc{I}}}^\top \m{g} (\m{x}_k))\| \\
\nonumber
 &\leq \|\m{Z}_{\m{A}_{\mc{I}}}\m{u}_k \| + \alpha \| \m{Z}_{\m{A}_{\mc{I}}} \m{Z}_{\m{A}_{\mc{I}}}^\top \m{g} (\m{x}_k)\| \\
 \nonumber
 & \leq | \sigma_k| + \alpha \| \m{g}(\m{x}_k)\| 
\\& \le  U_H + \alpha U_g,
\end{align}
where the second inequality is obtained from \eqref{eq:negcurv} and \eqref{eqn:zprop}; and the last inequality uses Remark~\ref{rem:bound}.  We also note that  
\begin{align}\label{eq:negdes}
\nonumber
{\m{g}(\m{x}_k)}^\top  \m{d}_k &= {\m{g}(\m{x}_k)}^\top  \m{Z}_{\m{A}_{\mc{I}}} ( \m{u}_k-\alpha \m{g}^\mc{I}(\m{x}_k)) \\
\nonumber
&  = \m{u}_k^\top \m{g}^\mc{I}(\m{x}_k)- \alpha {\m{g}(\m{x}_k)}^\top   \m{Z}_{\m{A}_{\mc{I}}} \m{g}^\mc{I}(\m{x}_k) \\
& \leq  -\alpha \|\m{g}^\mc{I}(\m{x}_k)\|^2,
\end{align} 
where the inequality follows from \eqref{eq:negcurv}.
\\
\textbf{Case 2}: $\m{d}_k =  \m{Z}_{\m{A}_{\mc{I}}}\m{p}^r_k$. We can suppose that $\|\m{g}^{\mc{I}}(\m{x}_k)\|>0$ because otherwise $\|\m{d}_k\|=0$. Then, from  \eqref{eq:regnewton}, we have
\begin{eqnarray}\label{eqn:up:rnew}
\nonumber
\|\m{d}_k\| & \le &  \left \| \m{Z}_{\m{A}_{\mc{I}}} \big(\m{Z}_{\m{A}_{\mc{I}}}^\top  \m{H}(\m{x}_k) \m{Z}_{\m{A}_{\mc{I}}}+ (|\sigma_k|+\epsilon_R) \m{I}_n \big)^{-1} \m{Z}_{\m{A}_{\mc{I}}}^\top \m{g}(\m{x}_k) \right\| \\
\nonumber
 &\le & \left\|\left(\m{Z}_{\m{A}_{\mc{I}}}^\top  \m{H}(\m{x}_k) \m{Z}_{\m{A}_{\mc{I}}} +(|\sigma_k|+\epsilon_R) \m{I}_n \right)^{-1} \right\| \ \|\m{g}^{\mc{I}}(\m{x}_k)\| \\ 
&\le & \epsilon_R^{-1} \|\m{g}^{\mc{I}}(\m{x}_k)\| \le  \epsilon_R^{-1}  U_g ,
\end{eqnarray}
where the second inequality uses \eqref{eqn:zprop} and the last inequality follows from Remark~\ref{rem:bound}. Further, we observe that
\begin{align}\label{eq:newdec}
\nonumber
{\m{g}(\m{x}_k)}^\top  \m{d}_k  &= - {\m{g}(\m{x}_k)}^\top  \m{Z}_{\m{A}_{\mc{I}}} \left(\m{Z}_{\m{A}_{\mc{I}}}^\top  \m{H}(\m{x}_k) \m{Z}_{\m{A}_{\mc{I}}}+ (|\sigma_k|+\epsilon_R) \m{I}_n  \right)^{-1} \m{Z}_{\m{A}_{\mc{I}}}^\top \m{g}(\m{x}_k)  \\
&\leq -  \left\|\m{Z}_{\m{A}_{\mc{I}}}^\top  \m{H}(\m{x}_k) \m{Z}_{\m{A}_{\mc{I}}} +(|\sigma_k|+\epsilon_R) \m{I}_n \right\| ^{-1} \|{\m{g}^{\mc{I}}(\m{x}_k)}\|^2 \\
&\leq - \left(2 U_H+\epsilon_R \right)^{-1} \|{\m{g}^{\mc{I}}(\m{x}_k)}\|^2,
\end{align}
where the last inequality uses  Remark~\ref{rem:bound}. \\
\textbf{Case 3}: $\m{d}_k =\m{Z}_{\m{A}_{\mc{I}}} \m{p}^n_k$. In this case, the reduced Hessian is positive definite with smallest eigenvalue bounded below by $\epsilon_H$. Similar to the previous case, suppose that $\|\m{g}^{\mc{I}}(\m{x}_k)\|>0$. Then, from \eqref{eq:newton}, we obtain
\begin{eqnarray}\label{eqn:up:new}
\nonumber
\|\m{d}_k\|  & \le &    \| \m{Z}_{\m{A}_{\mc{I}}} \big(\m{Z}_{\m{A}_{\mc{I}}}^\top  \m{H}(\m{x}_k) \m{Z}_{\m{A}_{\mc{I}}}\big)^{-1} \m{Z}_{\m{A}_{\mc{I}}}^\top \m{g}(\m{x}_k) \| \\
\nonumber
 &\le& \sigma_k^{-1} \|\m{g}^{\mc{I}}(\m{x}_k)\|\\
                &\le&  \epsilon_H^{-1} \|\m{g}^{\mc{I}}(\m{x}_k)\| \le  \epsilon_H^{-1} U_g ,
\end{eqnarray}
where the third inequality follows since  $\sigma_k >\epsilon_H$ and the last inequality uses Remark~\ref{rem:bound}. Further, similar to the previous case, we have
\begin{align}\label{eq:newdecc}
{\m{g}(\m{x}_k)}^\top  \m{d}_k \leq - U_H^{-1}  \| \m{g}^{\mc{I}}(\m{x}_k)\|^2.
\end{align}
Now, it follows form  \eqref{eqn:up:negcur}, \eqref{eqn:up:rnew} and \eqref{eqn:up:new} that
\begin{equation*}
c_1= \max \left\{U_H + \alpha U_g,\epsilon_R^{-1}  U_g ,\epsilon_H^{-1} U_g \right\}. 
\end{equation*} 
Similarly, using \eqref{eq:negdes}, \eqref{eq:newdec} and \eqref{eq:newdecc}, we obtain
\begin{equation*}
c_2= \max \left\{\alpha, U_H^{-1} \right\}. 
\end{equation*} 
\end{proof}

The following Lemma~\ref{thm:glob:face} gives some properties of the iterates for a sequence generated by the linesearch conditions \eqref{eq:line1} and \eqref{eq:line2}. In particular, it shows that \NLCO{} converges toward second-order stationary.

\begin{lemma}\label{thm:glob:face}
Given Assumptions~\ref{assu-a} and \ref{assu-b}, assume that a sequence $\{\m{x}_k\}_{k=1}^{\infty}$ is generated by \NLCO{} with $\epsilon_H=0$. Then,
\begin{enumerate}[label=(\roman*)]
\item  \label{itm:fir} $ \lim_{k\to\infty} \|\m{g}^{\mc{A}}(\m{x}_k)\|=0$; and
\item \label{itm:sec} $ \liminf_{k\to\infty} \sigma_{\min}(\m{H}^{\mc{A}} (\m{x}_k))\geq 0$.
\end{enumerate}
\end{lemma}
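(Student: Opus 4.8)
The plan is to extract from the two line-search conditions \eqref{eq:line1} and \eqref{eq:line2} a summable sequence of objective decreases, split it into a first-order and a second-order part, and handle \ref{itm:fir} and \ref{itm:sec} separately. Throughout I write $\phi_k'(0)=\m{g}(\m{x}_k)^\top\m{d}_k$ and $\phi_k''(0)=\m{d}_k^\top\m{H}(\m{x}_k)\m{d}_k$. By Lemma~\ref{lem:bound_dk} every search direction is a descent direction, so $\phi_k'(0)\le 0$, and since $\min\{\phi_k''(0),0\}\le 0$ and $s_k>0$ we get $\psi_k(s_k)\le 0$. Hence \eqref{eq:line1} reads $f(\m{x}_{k+1})\le f(\m{x}_k)+\delta_1\psi_k(s_k)s_k$ with $\psi_k(s_k)s_k\le 0$, so $\{f(\m{x}_k)\}$ is nonincreasing. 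As $f$ is bounded below by $f_{\mathrm{low}}$ (Remark~\ref{rem:bound}), summing over $k$ gives $\sum_k|\psi_k(s_k)|s_k<\infty$ and therefore $\psi_k(s_k)s_k\to 0$. Writing this as $\phi_k'(0)s_k+\tfrac12\min\{\phi_k''(0),0\}s_k^2\to 0$ and noting both summands are nonpositive, I would conclude the two limits $\phi_k'(0)s_k\to 0$ and $\min\{\phi_k''(0),0\}s_k^2\to 0$ separately.

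For \ref{itm:fir} I would combine the curvature condition with the Hessian bound. By Lemma~\ref{lem:bound_dk} and Remark~\ref{rem:bound}, $|\phi_k''(t)|=|\m{d}_k^\top\m{H}(\m{x}_k+t\m{d}_k)\m{d}_k|\le U_Hc_1^2$ along the step, so the mean value theorem gives $|\phi_k'(0)|\le|\phi_k'(s_k)|+U_Hc_1^2 s_k$. Using \eqref{eq:line2} together with $|\psi_k(s_k)|=|\phi_k'(0)|+\tfrac12|\min\{\phi_k''(0),0\}|s_k\le|\phi_k'(0)|+\tfrac12U_Hc_1^2s_k$, this rearranges to $(1-\delta_2)|\phi_k'(0)|\le C s_k$ for a constant $C$, i.e. $|\phi_k'(0)|=O(s_k)$. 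If $\phi_k'(0)\not\to 0$, then along a subsequence $|\phi_k'(0)|$ is bounded below, forcing $s_k$ bounded below there and hence $|\phi_k'(0)s_k|$ bounded away from $0$, contradicting $\phi_k'(0)s_k\to 0$. Thus $\phi_k'(0)\to 0$, and the descent estimate $\phi_k'(0)=\m{g}(\m{x}_k)^\top\m{d}_k\le -c_2\|\m{g}^{\mc{A}}(\m{x}_k)\|^2$ from Lemma~\ref{lem:bound_dk} yields $\|\m{g}^{\mc{A}}(\m{x}_k)\|\to 0$.

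For \ref{itm:sec} I would argue by contradiction, supposing $\sigma_k\le-\epsilon<0$ along a subsequence $\mc{K}$; since $\epsilon_H=0$, the algorithm takes the negative-curvature step on $\mc{K}$, so $\m{d}_k=\m{Z}_{\m{A}_{\mc{I}}}\m{p}_k^c$ with $\m{p}_k^c=\m{u}_k-\alpha\m{g}^{\mc{A}}(\m{x}_k)$ as in \eqref{eq:neggrad}. Taking $\m{u}_k$ to be the leftmost-eigenvalue curvature direction normalized so that $\|\m{u}_k\|=|\sigma_k|$ as in \eqref{eq:negcurv}, whence $\m{u}_k^\top\m{H}^{\mc{A}}(\m{x}_k)\m{u}_k=\sigma_k\|\m{u}_k\|^2=\sigma_k|\sigma_k|^2$, and using $\|\m{g}^{\mc{A}}(\m{x}_k)\|\to 0$ from \ref{itm:fir} together with $\|\m{u}_k\|=|\sigma_k|\le U_H$, the cross terms vanish and $\phi_k''(0)=(\m{p}_k^c)^\top\m{H}^{\mc{A}}(\m{x}_k)\m{p}_k^c=-|\sigma_k|^3+o(1)\le-\tfrac12\epsilon^3$ for large $k\in\mc{K}$. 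Then $\min\{\phi_k''(0),0\}s_k^2\to 0$ forces $s_k\to 0$ on $\mc{K}$.

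The crux, and the step I expect to be the main obstacle, is to contradict $s_k\to 0$ by showing $s_k$ is bounded below whenever the curvature is bounded away from zero. The mechanism is that the factor $\tfrac12$ in the definition \eqref{eq:linf} of $\psi_k$ makes \eqref{eq:line2} unsatisfiable for small steps: by uniform continuity of $(t,\m{x},\m{d})\mapsto\m{d}^\top\m{H}(\m{x}+t\m{d})\m{d}$ on the compact set containing all trial points (this is where Assumptions~\ref{assu-a} and \ref{assu-b} and the bound $\|\m{d}_k\|\le c_1$ enter), there is an $\bar s>0$, depending only on $\epsilon$, such that $\phi_k''(t)\le\tfrac12\phi_k''(0)$ for all $t\in[0,\bar s]$ and all large $k\in\mc{K}$. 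Integrating gives $\phi_k'(s)\le\phi_k'(0)+\tfrac12\phi_k''(0)s=\psi_k(s)<0$, hence $|\phi_k'(s)|\ge|\psi_k(s)|>\delta_2|\psi_k(s)|$ for every $s\in(0,\bar s]$, so no such $s$ satisfies \eqref{eq:line2}. Therefore $s_k>\bar s$ on $\mc{K}$, contradicting $s_k\to 0$ and establishing $\liminf_{k\to\infty}\sigma_{\min}(\m{H}^{\mc{A}}(\m{x}_k))\ge 0$.
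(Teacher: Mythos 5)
Your proof is correct and follows essentially the same route as the paper: telescope the sufficient-decrease condition \eqref{eq:line1} to obtain $\phi_k'(0)s_k\to0$ and $\min\{\phi_k''(0),0\}s_k^2\to0$, then use the curvature condition \eqref{eq:line2} together with the mean value theorem and the bounds of Lemma~\ref{lem:bound_dk} to show that the stepsize cannot be small when $|\phi_k'(0)|$ or $-\phi_k''(0)$ is bounded away from zero. The only substantive difference is in part \ref{itm:sec}, where you work directly on the negative-curvature subsequence and invoke the Rayleigh-quotient identity $\m{u}_k^\top\m{H}^{\mc{A}}(\m{x}_k)\m{u}_k=\sigma_k\|\m{u}_k\|^2$ to get the quantitative bound $\phi_k''(0)\le-\tfrac12\epsilon^3$; this makes explicit a step the paper leaves implicit when it declares a ``contradiction to \eqref{eq:negcurv}'', since the qualitative condition $\m{u}_k^\top\m{H}^{\mc{A}}(\m{x}_k)\m{u}_k<0$ alone would be consistent with $\liminf_{k\to\infty}\m{u}_k^\top\m{H}^{\mc{A}}(\m{x}_k)\m{u}_k\ge0$.
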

\begin{proof}
It follows from \eqref{eq:line1} that
$$
f(\m{x}_k)- f(\m{x}_k+s_k \m{d}_k) \geq -\delta_1(\phi_k^{'}(0)s_k+ \frac{1}{2} \min \{ \phi_k^{''}(0) , 0\} s_k^2).
$$
Since $\delta_1>0$ and by Remark~\ref{rem:bound},  $f (\m{x}_k)$ is bounded below for all $k$, we obtain
\begin{align}\label{eqn:conv:phi}
\lim_{ k \rightarrow \infty} \phi_k^{'}(0)s_k =0
, \quad \text{and} \quad \lim_{ k \rightarrow \infty} \min\{\phi_k^{''}(0),0\}s_k^2 =0.
\end{align}

 \ref{itm:fir}.  We first show that 
\begin{equation}\label{lim:gd}
\lim_{k\to\infty} \phi_k^{'}(0)  =0, \quad \text{where} \quad \phi_k(s) = f(\m{x}_k+ s \m{d}_k).
\end{equation}
Assume by contradiction there is a subsequence $I^{'}$ and a positive constant $\epsilon_1$ such that
\begin{equation}\label{eqn:cont:grad}
 \phi_k^{'}(0)\leq  - \epsilon_1 <0, \quad \text{for} \quad k \in I^{'}.
\end{equation}
It follows from \eqref{eq:line2} that the stepsize $s_k$ satisfies
\begin{align}\label{eq:lem21}
\phi_k^{'}(s_k) \geq \delta_2 \psi_k(s_k),
\end{align}
where $ \psi_k(s)= \phi_k^{'}(0)+ \frac{1}{2} \min \{ \phi_k^{''}(0) , 0\} s$.  Since $\phi_k^{'}(0)$ is a continuously differentiable univariate function, the mean-value theorem ensures the existence of a $ t_k \in (0, s_k)$ such that
\begin{align}\label{eq:lem22}
 \phi^{'}_k(s_k) \leq \phi^{'}_k(0) + s_k \phi^{''}_k(t_k).
\end{align}
Combine \eqref{eq:lem21} with \eqref{eq:lem22} to get 
\begin{eqnarray}\label{eqn:bound:step}
\nonumber
s_k(\phi^{''}_k(t_k)+ \frac{\delta_2}{2} \max \{- \phi^{''}_k(0),0 \}) &\geq& - (1-\delta_2)\phi_k^{'}(0) \\
&\geq & \epsilon_1 (1-\delta_2) \qquad   \text{for} \quad k \in I',
\end{eqnarray}
where the last inequality uses \eqref{eqn:cont:grad}. From \eqref{eqn:bound:step}, we get $\lim \sup_{k\in I^{'}} s_k \neq 0$. Now, from \eqref{eqn:conv:phi}, we have $\lim_{ k \rightarrow \infty} \phi_k^{'}(0) =0$. This is a contradiction to \eqref{eqn:cont:grad}. Hence, the assumed existence of $I^{'}$ is false and \eqref{lim:gd} holds.

We now show that  $\lim_{k\to\infty} \|\m{g}^{\mc{A}}(\m{x}_k)\| = 0$. From Lemma~\ref{lem:bound_dk}, we have 
\begin{align*}
\phi_k^{'}(0) = {\m{g}(\m{x}_k)}^\top  \m{d}_k \leq -c_2\| \m{g}^\mc{A}(\m{x}_k)\|^2
\end{align*}
for some $c_2 >0$.  This, together with \eqref{lim:gd} implies that $ \|\m{g}^\mc{A}(\m{x}_k)\|\to 0$. 
\bigskip

\ref{itm:sec}. We first show that 
\begin{equation}\label{eq:secbound}
\liminf_{k \to \infty} \phi^{''}_k(0) \geq 0.
\end{equation}
Assume by contradiction that there is a subsequence $I^{''} $ such that $\phi^{''}_k(0)  \leq -\epsilon_2 <0 $ for $k \in I^{''}$. By the same argument as in \eqref{eq:lem21}--\eqref{eqn:bound:step},  there exists  a $ t_k \in (0, s_k)$ such that 
\begin{eqnarray*}
\nonumber
s_k(\phi^{''}_k(t_k)+ \delta_2 \max \{- \phi^{''}_k(0),0 \}) &\geq& - (1-\delta_2)\phi_k^{'}(0) \geq 0,
\end{eqnarray*}
where the second inequality follows since $\phi_k^{'}(0) \leq 0$. Thus, 
\begin{equation}\label{lemeq1}
\phi^{''}_k(t_k) \geq \delta_2\phi^{''}_k(0).
\end{equation}
On the other hand, our assumption $\phi^{''}_k(0)  <0$ together  with \eqref{eqn:conv:phi} gives $\lim_{k\in I^{''}} s_k = 0$. Thus, \eqref{lemeq1} cannot hold for $k$ sufficiently large. Consequently, the assumed existence of $I^{''}$ is false, and \eqref{eq:secbound} holds. 

Now, if $\epsilon_H=0$ and \ref{itm:sec} does not hold then by Algorithm~\ref{algo:nlco} and for $k$ sufficiently large, we have $\m{d}_k= \m{Z}_{\m{A}_{\mc{I}}}(\m{u}_k-\alpha  \m{Z}_{\m{A}_{\mc{I}}}^\top \m{g} (\m{x}_k))$. From \ref{itm:fir}, we have $\lim_{k\to\infty} \|\m{g}^{\mc{A}}(\m{x}_k)\| = 0$. Consequently, for $\mc{I} = \mc{A}(\m{x}_k)$, we obtain
\begin{align*}
\lim_{k \to \infty }  \| \m{d}_k -  \m{Z}_{\m{A}_{\mc{I}}} \m{u}_k \| = \lim_{k \to \infty }  \alpha \| \m{Z}_{\m{A}_{\mc{I}}}  \m{Z}_{\m{A}_{\mc{I}}}^\top \m{g}(\m{x}_k)\|=0
\end{align*}
for some $\alpha>0$.

Since $ \phi_k^{''}(0)= {\m{d}_k}^\top \m{H}(\m{x}_k) \m{d}_k $,  \eqref{eq:secbound} implies  $\liminf_{k\to\infty}    \m{u}_k^\top \m{H}^{\mc{A}}(\m{x}_k)  \m{u}_k  \geq 0$ which is a contradiction to \eqref{eq:negcurv}. Thus, \ref{itm:sec} holds. 
\end{proof}

The following Theorem~\ref{global_theorem} establishes the global convergence of \NPASA{} for solving the polyhedral constrained problem \eqref{P}.

\begin{theorem}\label{global_theorem}
Given Assumptions~\ref{assu-a} and \ref{assu-b}, assume that a sequence $\{\m{x}_k\}$ is generated by \NPASA{} with $\epsilon_E = 0$. Then, there exists at least one limit point of $\{\m{x}_k\}$ such that
\begin{equation}\label{eq:glob}
\liminf\limits_{k \to\infty} E(\m{x}_k)=0.
\end{equation}
Further, any limit point of $\{\m{x}_k\}$ is a stationary point of \eqref{P}.
\end{theorem}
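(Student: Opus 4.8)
The plan is to combine the monotone decrease of $f$ with the two distinct per-step decrease mechanisms---one for \GP{} steps and one for \NLCO{} steps---and then to branch the analysis on how often each phase is executed. First I would record the preliminary facts. Every step is a descent step: \eqref{SuffDecrCond1} gives decrease on \GP{} steps, while \eqref{eq:line1} together with $\psi_k(s_k)\le\phi_k'(0)=\m{g}(\m{x}_k)^\top\m{d}_k\le 0$ gives decrease on \NLCO{} steps. Hence $\{f(\m{x}_k)\}$ is nonincreasing and, by Remark~\ref{rem:bound}, bounded below, so it converges and $f(\m{x}_k)-f(\m{x}_{k+1})\to 0$; since $\{\m{x}_k\}\subseteq\mc{L}_f(\m{x}_1)$, which is compact by Assumption~\ref{assu-a}, limit points exist. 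I would then invoke the projected-gradient theory of \cite{calamai1987projected}: a \GP{} step taken from a point with $E(\m{x}_k)\ge\epsilon$ produces a decrease bounded below by a positive constant $\rho(\epsilon)$ that depends only on $\epsilon$ and the uniform bounds of Remark~\ref{rem:bound}. Because the decreases are summable, this forces $E(\m{x}_k)\to 0$ along any infinite subsequence of \GP{} steps. Dually, rerunning the contradiction argument of part~\ref{itm:fir} of Lemma~\ref{thm:glob:face} on the subsequence of \NLCO{} steps (the only property used there is summability of the decreases) gives $e(\m{x}_k)=\|\m{g}^{\mc{A}}(\m{x}_k)\|\to 0$ along the \NLCO{} steps.

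To prove \eqref{eq:glob} I would split into two cases. If only finitely many \GP{} steps are taken, then for all large $k$ Algorithm~\ref{algo:npasa} stays in Phase~2; consequently the test $e\le\theta E$ never fires, so $\theta$ is frozen at some $\theta^\ast>0$ and $e(\m{x}_k)>\theta^\ast E(\m{x}_k)$ for all large $k$. Combining this with $e(\m{x}_k)\to 0$ yields $E(\m{x}_k)<e(\m{x}_k)/\theta^\ast\to 0$. If instead infinitely many \GP{} steps are taken, the decrease estimate above already gives $E(\m{x}_k)\to 0$ along that subsequence. In either case $\liminf_k E(\m{x}_k)=0$, which is \eqref{eq:glob}; extracting a convergent subsequence along which $E\to 0$ and using lower semicontinuity of $E$ produces a limit point that is stationary.

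For the stronger claim that \emph{every} limit point is stationary, let $\bar{\m{x}}$ be a limit point and suppose for contradiction that $E(\bar{\m{x}})=2\epsilon>0$. By lower semicontinuity of the projected-gradient residual there is a neighborhood $N$ of $\bar{\m{x}}$ on which $E\ge\epsilon$, and infinitely many iterates lie in $N$. They cannot be \GP{} iterates infinitely often, since $E\to 0$ along \GP{} steps contradicts $E\ge\epsilon$ on $N$; hence all but finitely many iterates in $N$ are generated by Phase~2. The decisive structural fact is that $\theta$ is held constant throughout any single Phase~2 episode: on the part of such an episode lying in $N$ we have $\theta E\ge\theta\epsilon>0$ fixed while $e(\m{x}_k)\to 0$, so the branching test $e\le\theta E$ must eventually fire, returning the method to Phase~1 and forcing a \GP{} step. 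Since the regularized and Newton \NLCO{} directions obey $\|\m{d}_k\|\le c\,e(\m{x}_k)\to 0$ (as in the proof of Lemma~\ref{lem:bound_dk}), these steps are asymptotically negligible and cannot carry the iterates out of $N$, so the forced \GP{} step is based in $N$ and yields a decrease of at least $\rho(\epsilon)>0$. Repeating this infinitely often makes the total decrease infinite, contradicting boundedness of $f$ below; therefore $E(\bar{\m{x}})=0$ and $\bar{\m{x}}$ is stationary for \eqref{P}.

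The main obstacle is this last step: the interleaving of the two phases, driven by the shrinking threshold $\theta$, must be controlled so that the forced \GP{} steps actually occur at points of $N$. The pathological scenario to exclude is one in which \NLCO{} steps cluster near a non-stationary point while $\theta\to 0$, and ruling it out relies on the two features highlighted above---that $\theta$ is frozen within a Phase~2 episode, and that \NLCO{} displacements near such a point vanish. A secondary difficulty is the negative-curvature step, whose length scales with $|\sigma_k|$ rather than with $e(\m{x}_k)$; one must show separately, using \eqref{eq:negcurv}, that it delivers its own uniform decrease, so that it too can occur only finitely often near a non-stationary limit point.
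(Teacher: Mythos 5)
Your case split for \eqref{eq:glob} (finitely many versus infinitely many \GP{} steps, with the frozen-$\theta$ argument and Lemma~\ref{thm:glob:face}\ref{itm:fir} handling the pure-\NLCO{} tail) is essentially the paper's proof. The genuine gap is in the claim that underpins both your \GP{} case and your final contradiction argument: that a \GP{} step taken from a point with $E(\m{x}_k)\ge\epsilon$ yields a decrease bounded below by a constant $\rho(\epsilon)>0$ depending only on $\epsilon$ and the bounds of Remark~\ref{rem:bound}. This is false. The Armijo decrease is controlled by $\|\m{x}_{k+1}-\m{x}_k\|^2/s_k$, and the projection can absorb almost the entire step when $\m{x}_k$ is close to, but not on, a face. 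Concretely, for $\Omega=\{(x,y):y\ge 0\}$, $\m{x}_k=(0,\delta)$ and $\m{g}(\m{x}_k)=(0,G)$, no constraint is active, so $E(\m{x}_k)=G$ can be large, yet $\|\mc{P}_\Omega(\m{x}_k-s\m{g}(\m{x}_k))-\m{x}_k\|=\delta$ for all $s\ge\delta/G$, so the decrease is $O(\delta^2)$, arbitrarily small. Because $E$ is only lower semicontinuous, ``$E\ge\epsilon$ on a neighborhood'' gives no uniform lower bound on the step length. This is precisely why the paper routes the \GP{} cases through \cite[Theorems 2.4 and 3.4]{calamai1987projected}: summability of the decreases gives $\|\m{x}_{k+1}-\m{x}_k\|/s_k\to 0$ along the \GP{} steps, and the conclusion $E(\m{x}_{k+1})\to 0$ is obtained at the \emph{post-projection} iterates $\m{x}_{k+1}$ (which land on the absorbing faces), not at the base points; your assertion that ``$E(\m{x}_k)\to 0$ along any infinite subsequence of \GP{} steps'' is likewise unjustified for the base points, by the same example.

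The consequence is that your proof of the second claim (every limit point is stationary) does not close: the step ``the forced \GP{} step is based in $N$ and yields a decrease of at least $\rho(\epsilon)$'' fails, and the argument that iterates near $\bar{\m{x}}$ cannot be \GP{} iterates infinitely often also leans on the unproved $E\to 0$ at \GP{} base points. (You also leave the negative-curvature step unresolved, as you note yourself: its length scales with $|\sigma_k|$, so $\|\m{d}_k\|\le c\,e(\m{x}_k)$ fails for it and Phase~2 displacements need not vanish near $\bar{\m{x}}$.) The repair is the paper's route: run the case analysis for an arbitrary limit point $\m{x}^*$ with subsequence $K$, obtain $E\to 0$ along iterates converging to $\m{x}^*$ (post-\GP{} iterates via Calamai--Mor\'{e} in the \GP{} cases; the whole tail via $e\ge\bar\theta E$ and Lemma~\ref{thm:glob:face}\ref{itm:fir} in the pure-\NLCO{} case), and finish with lower semicontinuity of $E$, rather than trying to extract a uniform per-step decrease near a non-stationary point.
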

\begin{proof}
The proof is similar to that of \cite[Theorem 3.2]{hager2016active}. By Assumption~\ref{assu-a}, there exists at least one limit point $\m{x}^\ast$ of $\m{x}_k$. Let $\{\m{x}_k:~ k \in K\}$ be an infinite subsequence of $\m{x}_k$ such that $\lim_{ k\in K, k\rightarrow \infty} \m{x}_k = \m{x}^\ast$. We consider three cases: \\
\textbf{Case~1}. \textit{Only \GP{} is performed for $k$ sufficiently large}. By Assumption~\ref{assu-a}
and \cite[Theorem 2.4]{calamai1987projected},  $\lim_{k\in K, k\rightarrow \infty} \|\m{x}_{k+1} -\m{x}_k\|/s_k = 0.$
This together with the fact that $\|\m{x}_{k+1} - \m{x}^\ast\| \leq \| \m{x}_{k+1} -\m{x}_k\| + \| \m{x}_{k} -\m{x}^\ast\|$, implies  $\lim_{k\rightarrow \infty, k\in K} \m{x}_{k+1} = \m{x}^\ast$. 

Now, since $E(\m{x})=\|\nabla_\Omega f(\m{x})\|$ is lower semicontinuity \cite[Lemma 3.3]{calamai1987projected}, it follows from \cite[Theorem 3.4]{calamai1987projected} that $$\lim_{ k\in K, k\rightarrow \infty} E(\m{x}_{k+1})  = 0.$$ 
\textbf{Case~2}. \textit{Only \NLCO{} is performed for $k$ sufficiently large}. Since $\theta$ is never reduced in phase two, there exists $\bar{\theta} > 0$ such that $\theta \equiv \bar{\theta}$ for $k$ sufficiently large. Hence, we have $e(\m{x}_k) \ge \bar{\theta} E(\m{x}_k)$ when $k$ is  large enough. We note that no index in the active set can be freed from $\m{x}_k$ to $\m{x}_{k+1}$ using the \NLCO{}. This together with \eqref{eqn:act:seq} implies that the active set becomes unchanged for $k$ large enough. Consequently, Lemma~\ref{thm:glob:face}\ref{itm:fir} together with the inequality $e(\m{x}_k) \ge \bar{\theta} E(\m{x}_k)$  gives \eqref{eq:glob}. 
\\
\textbf{Case~3}. \textit{There are an infinite number of branches from \NLCO{} to \GP{}}. In this case, the \GP{} step is performed an infinite number of iterations at $k_1, k_2, \ldots $, where $\{k_i\} \subseteq K$. Now, by the same argument as in Case~1, we have $\lim_{k_i\rightarrow \infty} \m{x}_{k_i+1} = \m{x}^\ast$,  and $$\lim_{ k_i\in K, k_i\rightarrow \infty} E(\m{x}_{k_i+1})  = 0.$$

To show the second half of the theorem, we note that by the lower semicontinuity of $E(\m{x})$,
$$E(\m{x}^\ast) \leq \liminf\limits_{k \to\infty} E(\m{x}_k) = 0,$$ which guarantees that $\m{x}^\ast$ is a stationary point of \eqref{P}.
\end{proof}

\section{Local convergence analysis}\label{sec:4}

This section presents theoretical properties to quantify the asymptotic behavior of the proposed algorithm. In particular, we show that the \NLCO{} step is taken asymptotically when the active constraints are linearly independent and a strong second-order sufficient optimality condition hold.  

Let $\m{x}^\ast$ be a stationary point of \eqref{P} and $\Lambda (\m{x}^*)$ be the set of Lagrange multipliers associated with the constraints. That is, $\m{x}^\ast \in \Omega$ and for any $\g{\lambda}^\ast \in \Lambda (\m{x}^\ast)$, $(\m{x}^\ast, \g{\lambda}^\ast)$
satisfies
\begin{align}\label{x-opt}
\nonumber
\m{g}(\m{x}^*) + \m{A}\tr \g{\lambda}^*&= \m{0}, \quad
\g{\lambda}^* \ge \m{0}, \quad \lambda_i^* = 0
\mbox{ if } i \in \mc{F}(\m{x}^*),\\
\lambda^*_i(\m{A} \m{x}^*-\m{b})_i&=0, \quad \forall i.
\end{align}
For $ s \ge 0$, let
\begin{equation}\label{y-def}
\m{y}(\m{x}, s) = \mc{P}_\Omega (\m{x} - s \m{g}(\m{x})) =
\arg \;
\min \left\{ \frac{1}{2} \|\m{x} - s \m{g}(\m{x}) - \m{y}\|^2:
\m{Ay} \le \m{b} \right\}.
\end{equation}
Let $\m{y} = \m{y}(\m{x}, s)$ and define $\Lambda (\m{x}, s)$ the set of multipliers $\g{\lambda} (\m{x})$ associated with the polyhedral constraints in \eqref{y-def}. Then, $\g{\lambda} (\m{x}) \in  \Lambda (\m{x}, s)$ is any vector that satisfies the conditions
\begin{align}\label{y-opt}
\nonumber
\m{y} - (\m{x} - s \m{g}(\m{x})) + \m{A}^\top \g{\lambda} (\m{x}) &= \m{0}, \quad
\g{\lambda} (\m{x}) \ge \m{0}, \quad \lambda_i (\m{x}) = 0
\mbox{ if } i \in \mc{F}(\m{y}),\\
\lambda_i(\m{x})(\m{A}\m{y}-\m{b})_i&=0, \quad \forall i.
\end{align}
It follows from \eqref{x-opt} and \eqref{y-opt} that 
\begin{equation}\label{scaling}
 \m{y}(\m{x}^*, s) = \m{x}^* \quad \text{and} \quad \Lambda(\m{x}^*, s) = s \Lambda (\m{x}^*).
\end{equation}

Influenced by \cite{burke1994exposing,calamai1987projected,hager2006new,hager2016active,zhang2020smoothing}, we next show that \NPASA{} is able to identify the active variables that are nondegenerate at the solution in a finite number of iterations. This is essential in providing the quadratic convergence result of our proposed active set algorithm.
\begin{lem}\label{thm:iden}
If \NPASA{} with $\epsilon_E = 0$ generates an infinite sequence of iterates $\{\m{x}_k\}$ converging to $\m{x}^*$ where the active constraint gradients are linearly independent, then there is an index $K$ such that $\mc{A}_+(\m{x}^*)  \subset  \mc{A}(\m{x}_k)$ for all $k \geq K$ where 
\begin{align*}
\mc{A}_+(\m{x}^*) = \{i \in \mc{A}(\m{x}^*): \Lambda_i (\m{x}^*) > 0\}.
\end{align*}
\end{lem}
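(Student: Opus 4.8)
The plan is to exploit the projection map $\m{y}(\m{x},s)$ from \eqref{y-def} together with the linear independence of the active gradients, and to treat separately the iterations in which a \GP{} step is taken and those in which only \NLCO{} steps are taken for large $k$. Throughout I use that the full sequence converges, so that (by the lower semicontinuity of $E$ and Theorem~\ref{global_theorem}) $\m{x}^*$ is stationary; under the linear independence hypothesis the multiplier set $\Lambda(\m{x}^*)$ is the singleton $\{\g{\lambda}^*\}$, and $\mc{A}_+(\m{x}^*)=\{i\in\mc{A}(\m{x}^*):\lambda^*_i>0\}$.

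First I would record two elementary facts. (a) Since every $i\in\mc{F}(\m{x}^*)$ satisfies $(\m{Ax}^*-\m{b})_i<0$, continuity gives an index $K_0$ with $\mc{A}(\m{x}_k)\subseteq\mc{A}(\m{x}^*)$ for all $k\ge K_0$; the same applies to any point near $\m{x}^*$, in particular to $\m{y}(\m{x},s)$, which tends to $\m{x}^*$ uniformly in $s$ by nonexpansiveness of the projection. (b) The accepted \GP{} stepsize is bounded below: using the projection inequality $\langle \m{g}(\m{x}_k),\m{x}_{k+1}-\m{x}_k\rangle\le -\|\m{x}_{k+1}-\m{x}_k\|^2/s$ together with the Lipschitz bound $U_H$ of Remark~\ref{rem:bound}, the Armijo test \eqref{SuffDecrCond1} holds for every $s\le 2(1-\delta_1)/U_H$, so $s_k\ge\underline{s}:=\min\{1,2\eta(1-\delta_1)/U_H\}>0$.

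The \emph{core step} is to show that the projection identifies the strongly active constraints: there is a radius $r>0$ such that for every $\m{x}\in\mc{B}_r(\m{x}^*)$ and every $s\in[\underline{s},1]$ one has $\mc{A}_+(\m{x}^*)\subseteq\mc{A}(\m{y}(\m{x},s))$. To do this I would write the multiplier $\g{\lambda}(\m{x})$ of \eqref{y-opt}, which by fact (a) is supported on $\mc{A}(\m{x}^*)$, through the normal equations $\m{A}_{\mc{A}(\m{x}^*)}\m{A}_{\mc{A}(\m{x}^*)}\tr\,\g{\lambda}_{\mc{A}(\m{x}^*)}(\m{x})=\m{A}_{\mc{A}(\m{x}^*)}\big(\m{x}-s\m{g}(\m{x})-\m{y}(\m{x},s)\big)$; linear independence makes $\m{A}_{\mc{A}(\m{x}^*)}\m{A}_{\mc{A}(\m{x}^*)}\tr$ invertible, so $\g{\lambda}(\m{x})$ is jointly continuous in $(\m{x},s)$ and, by the scaling relation \eqref{scaling}, equals $s\g{\lambda}^*$ at $\m{x}=\m{x}^*$. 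Hence for $i\in\mc{A}_+(\m{x}^*)$ we get $\lambda_i(\m{x})\ge \underline{s}\lambda^*_i/2>0$ on a small enough ball (uniformly in $s$ over the compact interval), and complementarity in \eqref{y-opt} forces $(\m{Ay}-\m{b})_i=0$, i.e. $i\in\mc{A}(\m{y}(\m{x},s))$.

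Finally I would assemble the conclusion using the branching structure of the three cases in the proof of Theorem~\ref{global_theorem}. When \GP{} is executed infinitely often (the only-\GP{} and infinitely-many-branches cases), each such step with $\m{x}_k\in\mc{B}_r(\m{x}^*)$ and $s_k\ge\underline{s}$ already yields $\mc{A}_+(\m{x}^*)\subseteq\mc{A}(\m{x}_{k+1})$ by the core step, while every intervening \NLCO{} step preserves this inclusion because \eqref{eqn:act:seq} only enlarges the active set; taking $K$ to be one past the first such \GP{} step proves the claim. When only \NLCO{} steps occur for large $k$, the active set stabilizes to some $\bar{\mc{A}}\subseteq\mc{A}(\m{x}^*)$, and Lemma~\ref{thm:glob:face}\ref{itm:fir} gives $\m{Z}_{\m{A}_{\bar{\mc{A}}}}\tr\m{g}(\m{x}^*)=\m{0}$, so $\m{g}(\m{x}^*)\in\textnormal{range}(\m{A}_{\bar{\mc{A}}}\tr)$; comparing with $\m{g}(\m{x}^*)=-\m{A}_{\mc{A}(\m{x}^*)}\tr\g{\lambda}^*_{\mc{A}(\m{x}^*)}$ and invoking linear independence forces $\lambda^*_i=0$ for $i\notin\bar{\mc{A}}$, whence $\mc{A}_+(\m{x}^*)\subseteq\bar{\mc{A}}=\mc{A}(\m{x}_k)$. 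The main obstacle is the core step: proving that a projection step never releases a strongly active constraint near $\m{x}^*$, which hinges on both the continuity of the multiplier map under linear independence and the uniform lower bound $\underline{s}$ on the \GP{} stepsize, so that the identification is robust across the full range of accepted steplengths.
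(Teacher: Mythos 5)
Your proof is correct and follows the same overall strategy as the paper's: first show that the projection $\m{y}(\m{x},s)$ identifies the strongly active constraints near $\m{x}^*$ via continuity of the multiplier map under linear independence, then split into the case where a \GP{} step is eventually taken (so the identification transfers to $\m{x}_{k+1}=\m{y}(\m{x}_k,s_k)$ and is preserved by \eqref{eqn:act:seq}) and the case where only \NLCO{} is executed. The differences are in how the sub-steps are executed, and they are worth noting. For the identification step, the paper invokes a Lipschitz estimate on $\Lambda(\m{x}_k,s)$ from \cite[Corollary 6.1]{hager2016active} for a fixed $s$, whereas you derive joint continuity of $\g{\lambda}(\m{x})$ in $(\m{x},s)$ directly from the normal equations and, crucially, combine it with an explicit lower bound $\underline{s}$ on the accepted \GP{} stepsize so that the identification is uniform over $s\in[\underline{s},1]$; this is more self-contained and quietly repairs a gap in the paper's argument, where the threshold $\tilde{K}$ in \eqref{y-active} depends on $s$ but is then applied with the varying stepsizes $s_k$. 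For the \NLCO{}-only case, the paper sets up the auxiliary projection problem \eqref{eq:proj:prob}, extracts multipliers $\g{\mu}_k$, and passes to the limit to compare with $\g{\lambda}^*$; you instead observe that $\m{Z}_{\m{A}_{\bar{\mc{A}}}}\tr\m{g}(\m{x}^*)=\m{0}$ places $\m{g}(\m{x}^*)$ in the range of $\m{A}_{\bar{\mc{A}}}\tr$ and use uniqueness of the representation under linear independence to kill $\lambda^*_i$ for $i\notin\bar{\mc{A}}$ --- the same content, but shorter and avoiding the limiting argument. One small caution: Lemma~\ref{thm:glob:face} is stated for \NLCO{} with $\epsilon_H=0$, so you should either note that part \ref{itm:fir} of that lemma does not actually use $\epsilon_H=0$, or argue the vanishing of the reduced gradient along a subsequence as in Case~2 of Theorem~\ref{global_theorem}, as the paper does.
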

\begin{proof}
Since the rows of $\m{A}$ corresponding to indices $i \in \mc{A}(\m{x}^*)$ are linearly independent, $\Lambda (\m{x}^*)$ is a singleton. By \eqref{scaling}, $\Lambda (\m{x}^*, \alpha)$ is also singleton for any $s>0$. Since $\{\m{x}_k\}$ converges to $\m{x}^*$, and $\m{y}(\m{x}, .)$ is a Lipschitz continuous function, there exists an integer $\bar{K}$ such that $ \mc{A}(\m{y}(\m{x}^k, s )) \subset \mc{A} (\m{x}^\ast)$ for all $k \geq \bar{K}$. Consequently, the gradients of the active constraints at $\m{y}(\m{x}_k, s )$ are linearly independent which implies that $\Lambda(\m{x}_k, s)$ is a singleton for $k \geq \bar{K}$. Then, from \cite[Corollary 6.1]{hager2016active}, we get
\begin{equation}\label{eqn:ste:lip}
 \|\Lambda(\m{x}_k, s) - \Lambda (\m{x}^*, s) \|
\le c_3\|\m{x}_k - \m{x}^*\|
\end{equation}
for some constant $c_3$.

For any $i \in \mc{A}_{+}(\m{x}^\ast)$, the Lagrange multiplier $\lambda^* \in \Lambda (\m{x}^*)$ satisfies $\lambda^*_{i} > 0$. This together with \eqref{eqn:ste:lip} implies that there exists an integer $\hat{K}_{i}$ such that  $\lambda_{ki} > 0$, $ \g{\lambda}_{k} \in \Lambda(\m{x}_k, s)$ for $k \geq \hat{K}_{i}$. From \eqref{y-opt} we get $(\m{A} \m{y}(\m{x}_k, s ) - \m{b})_{i}=0$ which implies that $i \in \mc{A}(\m{y}(\m{x}_k, s ))$. Now, let $\hat{{K}} = \max\big\{ \hat{K}_{i}, ~~ i \in  \mc{A}_+(\m{x}^*)\big\}$ and $\tilde{K} = \max \big\{ \bar{K}, \hat{K} \big\}$. Clearly, for any $i \in \mc{A}_{+}(\m{x}^*)$, and any given $s >0$, we have
\begin{equation}\label{y-active}
i \in \mc{A}(\m{y}(\m{x}_k, s)) , \qquad \textnormal{for all} \quad  k \geq  \tilde{K}.
\end{equation}

We now consider two disjoint cases for the iterates generated by Algorithm~\ref{algo:npasa}:\\
\textbf{Case~1}. \textit{There exists an integer $\tilde{K}_1 \geq \tilde{K}$ such that $\m{x}_{\tilde{K}_1+1}$ is obtained from \GP{}}. Then, for any $k \geq \tilde{K}_1$ such that $\m{x}_{k+1}$ is obtained from $\m{x}_k$ by the \GP{} step, it follows from \eqref{y-def} that
\begin{equation}\label{eqn:iden2}
\m{x}_{k+1} = \mc{P}_{\Omega} (\m{x}_k - s_k \m{g}(\m{x}_k))= \m{y}(\m{x}_k, s_k).
\end{equation}
This together with \eqref{y-active} implies that $i \in \mc{A} (\m{x}_{k+1})$ for any $i \in \mc{A}_+(\m{x}^*)$. Since no active constraint can be freed by the NLCO step in phase two, we get
$
i \in \mc{A} (\m{x}_k)$ for all $  k \geq \tilde{K}_1 + 1$.
\\
\textbf{Case~2}.
\textit{$\m{x}_{k+1}$ is obtained from \NLCO{} for any $k \geq \tilde{K}$}. By \eqref{eqn:act:seq}, the active constraints become stable after a finite number of steps which implies that there exists an integer $\tilde{K}_2 > \tilde{K}$ such that
\begin{equation}\label{eqn:iden3}
\mc{A}(\m{x}_k) \equiv \mc{I}  \subseteq   \mc{A}(\m{x}^*),  \qquad \textnormal{for all} \quad  k \geq \tilde{K}_2.
\end{equation}

Now, consider the following optimization problem 
\begin{equation}\label{eq:proj:prob}
\min_{\m{y}} \frac{1}{2}\|  \m{g}(\m{x}_k) -\m{y}\|^2  \quad  \text{s.t.} \quad \m{a}^\top_i \m{y} =0 ~\mbox{ if }~ i \in \mc{I}
\end{equation}
From the first-order optimality conditions, there exists a vector $\g{\mu}_k \in \mathbb{R}^m$ such that
\begin{equation}\label{eqn:iden4}
\m{g}(\m{x}_k) -  \m{Z}_{\m{A}_{\mc{I}}} \m{Z}_{\m{A}_{\mc{I}}}^\top \m{g}(\m{x}_k)  + \m{A}^\top \g{\mu}_{k} = \m{0}, \quad
\g{\mu}_k \ge \m{0}, \quad (\g{\mu}_k)_{i} = 0 ~\mbox{ if }~ i \in \mc{I}^c.
\end{equation}
We note that $\g{\mu}_k$ is unique since the column vectors $\m{a}_i$, $ i \in \mc{I} \subseteq \mc{A}(\m{x}^\ast)$ are linearly independent. Further, by the strong convexity of objective in \eqref{eq:proj:prob} with $\m{x}_k$ being replaced by $\m{x}^\ast$, and the linear independence of $\{ \m{a}_i, i \in \mc{A}(\m{x}^\ast)\}$, there exists a unique vector $ \bar{\m{g}}^\ast\in \mathbb{R}^n$ and a unique vector $\bar{\g{\lambda}}^\ast \in \mathbb{R}^m$ such that
\begin{equation}\label{eqn:iden5}
\m{g}(\m{x}^\ast)- \bar{\m{g}}^\ast + \m{A}^\top \bar{\g{\lambda}}^\ast= \m{0}, \quad
\bar{\g{\lambda}}^\ast \ge \m{0}, \quad \bar{\lambda}^\ast_i = 0
\mbox{ if } i \in \mc{F}(\m{x}^\ast).
\end{equation}

It follows from the first-order optimality conditions of \eqref{P} given in \eqref{x-opt} that 
\begin{equation}\label{eqn:iden6}
\m{g}(\m{x}^*) +\m{A}^\top \g{\lambda}^\ast = \m{0}, \quad
\g{\lambda}^\ast \ge \m{0}, \quad \lambda_i^\ast  = 0~\mbox{ if}~i \in \mc{F}(\m{x}^\ast).
\end{equation}
Now, we get $\bar{\m{g}}^\ast=0$ and $\bar{\g{\lambda}}^\ast= \g{\lambda}^\ast$, by comparing \eqref{eqn:iden5} with \eqref{eqn:iden6} and using the uniqueness of $\bar{\m{g}}^\ast$ and $\bar{\g{\lambda}}^\ast$ in \eqref{eqn:iden5}.  By Lemma~\ref{global_theorem},  $\lim_{k_j \rightarrow \infty}  \m{Z}_{\m{A}_{\mc{I}}} \m{Z}_{\m{A}_{\mc{I}}}^\top \m{g}(\m{x}_{k_j})=0$ for some infinite subsequence $\{k_j\} \subset \{k\}$ which together with \eqref{eqn:iden4} gives
\begin{eqnarray}\label{eqn:iden7}
\nonumber
0&=&\lim_{k_j \rightarrow \infty}  \m{g}_{k_j} + \lim_{k_j \rightarrow \infty} \m{A}^\top \g{\mu}_{k_j} \\
&=& \m{g}(\m{x}^*)  + \lim_{k_j \rightarrow \infty} \m{A}^\top \g{\mu}_{k_j} , \quad (\g{\mu}_{k_j})_i=0~\mbox{ if }~i \in \mc{I}^c.
\end{eqnarray}
Comparing \eqref{eqn:iden6} and \eqref{eqn:iden7}, and noting the uniqueness of $\g{\lambda}^\ast$ in \eqref{eqn:iden6}, we obtain $\lim_{k_j \rightarrow \infty}  (\g{\mu}_{k_j})_i= \g{\lambda}^*_i$ if $ i \in \mc{A}_+(\m{x}^\ast)$. By \eqref{eqn:iden4}, $(\g{\mu}_{k})_i = 0$ if $i \in \mc{I}^c$ which implies that $\lim_{k_j \rightarrow \infty}  (\g{\mu}_{k_j})_i= 0$ if  $i \in \mc{A}_+(\m{x}^\ast) \setminus \mc{I}$ and as a result  $\mc{A}_+(\m{x}^\ast) \setminus \mc{I} = \emptyset$. This shows that for any $i \in \mc{A}_{+}(\m{x}^\ast)$ and $k \geq \widehat{K}_2 $, we have $i \in \mc{I}=\mc{A}(\m{x}_k)$.

Taking both cases into consideration, there exists an index $K$ such that $\mc{A}_+(\m{x}^*)  \subset  \mc{A}(\m{x}_k) $.
\end{proof}

Next, we show  that \NPASA{} takes the \NLCO{} step asymptotically when the active constraints are linearly independent and a strong second-order sufficient optimality condition hold. We say that a stationary point $\m{x}^*$ of (\ref{P}) satisfies the strong second-order sufficient optimality condition if there exists
$\sigma_H> 0$ such that
\begin{equation} \label{opt2}
\m{d} \tr \nabla^2 f(\m{x}^*) \m{d} \ge \sigma_H\|\m{d}\|^2  \quad
\textnormal{whenever} \quad (\m{Ad})_i = 0 \textnormal{ for all } i \in \mc{A}_+(\m{x}^*).
\end{equation}
\begin{theorem}\label{thm:swi}
If \NPASA{} with $\epsilon_E = 0$ generates an infinite sequence of iterates $\m{x}_k$ converging to $\m{x}^*$ where the active constraint gradients are linearly independent and the strong second-order sufficient optimality condition holds, then there exists $\theta^* > 0$ such that $
e(\m{x}_k) \ge \theta^* E(\m{x}_k)$ for $k$ sufficiently large.
\end{theorem}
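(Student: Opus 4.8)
The plan is to show that the two stationarity measures are comparable near $\m{x}^*$ by sandwiching each of them against $\|\m{x}_k-\m{x}^*\|$. One checks directly that $e(\m{x}_k)\le E(\m{x}_k)$ always holds, since the tangent cone $\mc{T}(\m{x}_k)$ contains the subspace $\mc{N}(\m{A}_{\mc{I}})$ used to define the reduced gradient, so the genuine content of the theorem is the reverse bound $E(\m{x}_k)\le(\theta^*)^{-1}e(\m{x}_k)$. First I would pin down the active set: by Lemma~\ref{thm:iden}, $\mc{A}_+(\m{x}^*)\subseteq\mc{A}(\m{x}_k)$ for $k$ large, and since $\m{A}\m{x}_k\to\m{A}\m{x}^*$ every constraint active at $\m{x}_k$ is active at $\m{x}^*$; hence $\mc{A}_+(\m{x}^*)\subseteq\mc{I}\subseteq\mc{A}(\m{x}^*)$ with $\mc{I}:=\mc{A}(\m{x}_k)$. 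The single geometric fact driving the argument is that $\m{A}_{\mc{I}}\m{x}_k=\m{b}_{\mc{I}}=\m{A}_{\mc{I}}\m{x}^*$, so $\m{x}_k-\m{x}^*\in\mc{N}(\m{A}_{\mc{I}})$ and we may write $\m{x}_k-\m{x}^*=\m{Z}_{\m{A}_{\mc{I}}}\m{t}_k$, keeping the error inside the reduced subspace.

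For the lower bound on $e$, I would use $\m{Z}_{\m{A}_{\mc{I}}}^\top\m{g}(\m{x}^*)=\m{0}$ (as $\m{g}(\m{x}^*)$ lies in the range of $\m{A}_{\mc{I}}^\top$ by \eqref{x-opt}) to write $e(\m{x}_k)=\|\m{Z}_{\m{A}_{\mc{I}}}^\top(\m{g}(\m{x}_k)-\m{g}(\m{x}^*))\|$, and then Taylor expand $\m{g}(\m{x}_k)-\m{g}(\m{x}^*)=\nabla^2 f(\m{x}^*)(\m{x}_k-\m{x}^*)+o(\|\m{x}_k-\m{x}^*\|)$. Substituting $\m{x}_k-\m{x}^*=\m{Z}_{\m{A}_{\mc{I}}}\m{t}_k$ produces the reduced-Hessian quadratic form $\m{Z}_{\m{A}_{\mc{I}}}^\top\nabla^2 f(\m{x}^*)\m{Z}_{\m{A}_{\mc{I}}}\m{t}_k$. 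Because $\mc{A}_+(\m{x}^*)\subseteq\mc{I}$, the subspace $\mc{N}(\m{A}_{\mc{I}})$ sits inside $\{\m{d}:(\m{A}\m{d})_i=0,\ i\in\mc{A}_+(\m{x}^*)\}$, so the strong second-order condition \eqref{opt2} gives $\m{Z}_{\m{A}_{\mc{I}}}^\top\nabla^2 f(\m{x}^*)\m{Z}_{\m{A}_{\mc{I}}}\succeq\sigma_H\m{I}$; hence, once $k$ is large enough that the remainder is dominated, $e(\m{x}_k)\ge\tfrac{\sigma_H}{2}\|\m{x}_k-\m{x}^*\|$.

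For the upper bound on $E$, I would exploit that $E(\m{x}_k)=\|\mc{P}_{\mc{T}(\m{x}_k)}[-\m{g}(\m{x}_k)]\|$ is the norm of a nonexpansive map and compare it to the same projection applied to $-\m{g}(\m{x}^*)$. The key is that this reference projection vanishes: by \eqref{x-opt}, $-\m{g}(\m{x}^*)=\sum_{i\in\mc{A}_+(\m{x}^*)}\lambda_i^*\m{a}_i$ lies in the polar of $\mc{T}(\m{x}_k)$, namely the cone generated by $\{\m{a}_i:i\in\mc{I}\}$, which contains $\{\m{a}_i:i\in\mc{A}_+(\m{x}^*)\}$; thus $\mc{P}_{\mc{T}(\m{x}_k)}[-\m{g}(\m{x}^*)]=\m{0}$ by Moreau's decomposition. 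Nonexpansiveness of the projection onto the fixed cone $\mc{T}(\m{x}_k)$ then yields $E(\m{x}_k)\le\|\m{g}(\m{x}_k)-\m{g}(\m{x}^*)\|\le U_H\|\m{x}_k-\m{x}^*\|$, using the Lipschitz bound of Remark~\ref{rem:bound}. Chaining the two estimates gives $e(\m{x}_k)\ge\tfrac{\sigma_H}{2U_H}E(\m{x}_k)$, so $\theta^*=\sigma_H/(2U_H)$ works.

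The main obstacle, and the step I would treat most carefully, is the mismatch between the cone defining $E$ and the subspace defining $e$, compounded by the fact that $\mc{A}(\m{x}_k)$ need not equal $\mc{A}(\m{x}^*)$ and may vary with $k$. Both difficulties are resolved by the inclusions $\mc{A}_+(\m{x}^*)\subseteq\mc{I}\subseteq\mc{A}(\m{x}^*)$: the left inclusion is exactly what forces the reference projection $\mc{P}_{\mc{T}(\m{x}_k)}[-\m{g}(\m{x}^*)]$ to vanish and what lets \eqref{opt2} apply on the possibly smaller subspace $\mc{N}(\m{A}_{\mc{I}})$, while the right inclusion is what forces $\m{x}_k-\m{x}^*\in\mc{N}(\m{A}_{\mc{I}})$. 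Since $\sigma_H$ and $U_H$ are independent of $\mc{I}$ and only finitely many index sets $\mc{I}$ can occur, the constant $\theta^*=\sigma_H/(2U_H)$ is uniform in $k$, and the threshold on $\|\m{x}_k-\m{x}^*\|$ controlling the Taylor remainder may likewise be taken uniform, so the inequality holds for all sufficiently large $k$ regardless of which face the iterate currently lies on.
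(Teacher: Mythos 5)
Your proposal is correct and reaches the paper's conclusion with the same skeleton — sandwiching both stationarity measures against $\|\m{x}_k-\m{x}^*\|$ and taking $\theta^*=\sigma_H/(2U_H)$ — but the two key sub-estimates are established by genuinely different means. For the upper bound $E(\m{x}_k)\le U_H\|\m{x}_k-\m{x}^*\|$, the paper first invokes \cite[Theorem 4.5]{burke1994exposing} to get $\mc{P}_{\mc{T}(\m{x}_k)}[-\m{g}(\m{x}_k)]\to 0$, deduces $\|\mc{P}_{\mc{T}(\m{x}_k)}[-\m{g}(\m{x}^*)]\|\to 0$ by nonexpansiveness, and then upgrades this limit to exact vanishing via the observation that only finitely many tangent cones $\mc{T}_1,\ldots,\mc{T}_J$ can occur; you instead prove $\mc{P}_{\mc{T}(\m{x}_k)}[-\m{g}(\m{x}^*)]=\m{0}$ directly, noting that $-\m{g}(\m{x}^*)=\sum_{i\in\mc{A}_+(\m{x}^*)}\lambda_i^*\m{a}_i$ lies in the polar cone of $\mc{T}(\m{x}_k)$ once $\mc{A}_+(\m{x}^*)\subseteq\mc{A}(\m{x}_k)$, so Moreau's decomposition kills the projection. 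Your argument is more elementary and self-contained, and it makes transparent exactly why the inclusion from Lemma~\ref{thm:iden} is the decisive hypothesis. For the lower bound $e(\m{x}_k)\ge\tfrac{\sigma_H}{2}\|\m{x}_k-\m{x}^*\|$, the paper works with the inner product $(\m{x}_k-\m{x}^*)\tr(\m{g}(\m{x}_k)-\m{g}(\m{x}^*))$ via the integral mean-value form \eqref{31}, eliminates $\m{g}(\m{x}^*)$ and replaces $\m{g}(\m{x}_k)$ by $\bar{\m{g}}_k$ through the KKT identities \eqref{x1}--\eqref{x2}, and finishes with Cauchy--Schwarz; you write $e(\m{x}_k)=\|\m{Z}_{\m{A}_{\mc{I}}}^\top(\m{g}(\m{x}_k)-\m{g}(\m{x}^*))\|$ using $\m{Z}_{\m{A}_{\mc{I}}}^\top\m{g}(\m{x}^*)=\m{0}$, Taylor-expand, and bound below by the smallest eigenvalue of the reduced Hessian, exploiting $\m{x}_k-\m{x}^*\in\mc{N}(\m{A}_{\mc{I}})$. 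The paper's route gives an explicit radius $\rho$ for which the bound holds with no $o(\cdot)$ remainder, while yours is shorter but must appeal (as you do) to the finiteness of the possible index sets $\mc{I}$ to make the remainder threshold uniform in $k$; both are sound, and your uniformity remark closes the only potential gap.
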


\begin{proof}
Lemma~\ref{thm:iden} guarantees that  $\mc{A}_+(\m{x}^*)  \subset  \mc{A}(\m{x}_k)$  for all $k$ sufficiently large. This together with \cite[Theorem 4.5]{burke1994exposing} implies $\lim_{k\rightarrow \infty} \mc{P}_{\mc{T}(\m{x}_k)}[- \m{g}(\m{x}_k)]=0$, where  $\mc{T}(\m{x}_k)$ is the tangent cone defined in \eqref{eqn:swi5}.  Now, we have 
\begin{eqnarray}\label{eqn:swi4}
\nonumber
\lim_{k\rightarrow \infty} \left\| \mc{P}_{\mc{T}(\m{x}_k)}[-\m{g}(\m{x}^\ast)] \right\| &=& \lim_{k\rightarrow \infty} \left\|\mc{P}_{\mc{T}(\m{x}_k)}[- \m{g}(\m{x}_k)] - \mc{P}_{\mc{T}(\m{x}_k)} [ -  \m{g}(\m{x}_k)] + \mc{P}_{\mc{T}(\m{x}_k)}[- \m{g}(\m{x}^\ast)]\right\|\\
 \nonumber
 & \leq & \lim_{k\rightarrow \infty} \left\|\m{g}(\m{x}^*)-\m{g}(\m{x}_k )\right\| + \left\|\mc{P}_{\mc{T}(\m{x}_k)}[- \m{g}(\m{x}_k)] \right\| \\
 & =0.
\end{eqnarray}
Here, the  inequality is obtained from  the non-expensive property of the projection operator $\mc{P}$; and the equality follows from  our assumption that $\{\m{x}_k\}$ converges to $\m{x}^\ast$ and continuity of $\m{g} (\m{x})$.

Now, choose $k$ large enough such that $\m{x}_k$ is sufficiently close to $\m{x}^\ast$. Then, we have  $\mc{F}(\m{x}^\ast) \subseteq \mc{F}(\m{x}_k)$ and by Lemma~\ref{thm:iden} there exists an integer $K$ such that
\begin{equation}\label{eqn:swi6}
\mc{A}_+(x^*) \subseteq   \mc{A}(\m{x}_k) \subseteq   \mc{A}(x^*)    \qquad \textnormal{for all} \quad  k \geq K.
\end{equation}
From \eqref{eqn:swi6} and the fact that $\mc{A}(\m{x}^\ast)$ has a finite number of subsets, there are only a finite number of index sets $\mc{A}_1, \ldots, \mc{A}_J$ for $\mc{A}(\m{x}_k)$, $k = 1, 2, \ldots$ . Let
\begin{equation*}
\mc{T}_j = \{ \m{d}~:~ \m{a}_i^\top \m{d} \leq 0~~  i \in \mc{A}_j\}, \quad \text{for all} \quad j=1,2, \ldots, J.
\end{equation*}
Suppose
$
\{\mc{T}_1, \mc{T}_2, \ldots, \mc{T}_{\bar{J}} \}  \subseteq  \{\mc{T}_1, \mc{T}_2, \ldots, \mc{T}_{J}\}
$
is composed of all the elements in $\{\mc{T}_1,\mc{T}_2, \ldots, \mc{T}_J\}$ such that each $\mc{T}_j , j = 1, 2, \ldots , \bar{J}$, includes an infinite number of $\mc{T}(\m{x}_k)$, $k \geq 1$. From \eqref{eqn:swi4}, we have $\mc{P}_{\mc{T}_j}[ - \m{g}(\m{x}^\ast)] = 0$ for $j = 1, 2, \ldots, \bar{J}$. Consequently, for all $k$ sufficiently large, we get
\begin{equation*}
\mc{P}_{\mc{T}(\m{x}_k)} [-\m{g}(\m{x}^\ast)] \in \left\{\mc{P}_{\mc{T}_1}[-\m{g}(\m{x}^\ast)], \mc{P}_{\mc{T}_2}[-\m{g}(\m{x}^\ast), \ldots, \mc{P}_{\mc{T}_{\bar{J}}}[-\m{g}(\m{x}^\ast)] \right\},
\end{equation*}
which implies
\begin{equation}\label{eqn:swi7}
\mc{P}_{\mc{T}(\m{x}_k)} [-\m{g}(\m{x}^\ast)] =0
\end{equation}
for $k$ sufficiently large. Since $E(\m{x}_k) = \| \mc{P}_{\mc{T}(\m{x}_k)}
[- \m{g}(\m{x}_k)] \|$, we obtain 
\begin{eqnarray}\label{eqn:ppart1}
\nonumber
E(\m{x}_k) &=& \| \mc{P}_{\mc{T}(\m{x}_k)}
[- \m{g}(\m{x}_k)] - \mc{P}_{\mc{T}(\m{x}_k)} [ - \m{g}(\m{x}^\ast)] + \mc{P}_{\mc{T}(\m{x}_k)
}[- \m{g}(\m{x}^\ast)]\| \\
\nonumber
& \leq & \| \m{g}(\m{x}_k) - \m{g}(\m{x}^\ast)\| + \|\mc{P}_{\mc{T}(\m{x}_k)}[-\m{g}(\m{x}^\ast)]\|\\
& \leq &  U_H \| \m{x}_k - \m{x}^\ast\|,
\end{eqnarray}
where the second inequality follows from \eqref{eqn:swi7} and the Lipschitz continuity of $\m{g}$ with the constant $U_H$.

By the continuity of the second derivative of $f$, it follows from (\ref{opt2}) that for $\rho> 0$ sufficiently small,
\begin{eqnarray}
\quad \quad \quad (\m{x}-\m{x}^*)\tr (\m{g} (\m{x})-\m{g} (\m{x}^*)) &=&
(\m{x}-\m{x}^*)\tr \int_0^1 \nabla^2 f (\m{x}^* + t (\m{x} - \m{x}^*)) dt \;
(\m{x} - \m{x}^*) \nonumber \\
&\ge& 0.5\sigma_H \|\m{x}-\m{x}^*\|^2
\label{31}
\end{eqnarray}
for all $\m{x} \in \mc{B}_{\rho}(\m{x}^*) \cap \mc{S}_+$, where
\[
\mc{S}_+ = \{ \m{x} \in \mathbb{R}^n :
(\m{Ax}- \m{b})_i = 0 \mbox{ for all } i \in \mc{A}_+(\m{x}^*)\} .
\]
Choose $k$ large enough such that $\m{x}_k \in \mc{B}_\rho (\m{x}^*)$. The bound (\ref{31}) yields
\begin{equation} \label{upper}
0.5\sigma_H\|\m{x}_k-\m{x}^*\|^2 \le
(\m{x}_k - \m{x}^*)\tr (\m{g}(\m{x}_k) - \m{g}(\m{x}^*)) .
\end{equation}
By the first-order optimality conditions for a local minimizer $\m{x}^*$ of
(\ref{P}), there exists a multiplier $\g{\lambda}^* \in \mathbb{R}^m$ such that
\begin{equation}\label{1st}
\m{g}(\m{x}^*) + \m{A}\tr \g{\lambda}^* = 0 \quad \mbox{where} \quad
\lambda^*_i(\m{A} \m{x}^*-\m{b})_i=0, \quad \forall i \; \mbox{ and } \;
\g{\lambda}^* \ge \m{0}.
\end{equation}
For $k$ large enough, we have $\mc{A}_+(\m{x}^*) \subseteq  \mc{A}(\m{x}_k)$ and  $\lambda_i^* [\m{A}(\m{x}_k - \m{x}^*)]_i = 0$
for each $i$ since $[\m{A}(\m{x}_k - \m{x}^*)]_i = 0$
when $i \in \mc{A}_+(\m{x}^*)$ and $\lambda_i^* = 0$ when $i \in \mc{A}_+(\m{x}^*)^c$.
Hence, we have
\[
[\m{A}(\m{x}_k - \m{x}^*)]_i  \g{\lambda}^*_i = 0, \quad \forall i .
\]
Thus,
\begin{equation}\label{x1}
(\m{x}_k - \m{x}^*)\tr \m{g}(\m{x}^*) =
(\m{x}_k - \m{x}^*)\tr (\m{g}(\m{x}^*) + \m{A}\tr \g{\lambda}^*) = \m{0}
\end{equation}
by the first equality in (\ref{1st}).

Let $\mc{I}=\mc{A}(\m{x}_k)$ and  $ \bar{\m{g}}_k = \m{Z}_{\m{A}_{\mc{I}}} \m{Z}_{\m{A}_{\mc{I}}}^\top \m{g}(\m{x}_k)$. The first-order optimality conditions for the minimizer $\bar{\m{g}}_k$ in \eqref{eq:proj:prob} implies the existence of $\g{\lambda}_\mc{I}$ such that
\begin{equation}\label{2nd}
\bar{\m{g}}_k -
\m{g}(\m{x}_k) + \m{A}_\mc{I}\tr \g{\lambda}_\mc{I} = \m{0}
\quad \mbox{where} \quad \m{A}_\mc{I} \m{x}_k = \m{b}_\mc{I} .
\end{equation}
Since $\mc{A}(\m{x}_k) \subset \mc{A}(\m{x}^*)$, we have $(\m{A} (\m{x}_k - \m{x}^*))_i = 0$ for $i \in \mc{A}(\m{x}_k)$, and
\begin{equation}\label{x02}
 (\m{A}_\mc{I} (\m{x}_k - \m{x}^*))^\top \g{\lambda}_\mc{I} = 0.
 \end{equation}
Now, it follows from \eqref{2nd} and \eqref{x02} that
\begin{equation}\label{x2}
(\m{x}_k - \m{x}^*)\tr \m{g}(\m{x}_k) =
(\m{x}_k - \m{x}^*)\tr (\m{g}(\m{x}_k) -
\m{A}_\mc{I}\tr \g{\lambda}_\mc{I}) =
(\m{x}_k - \m{x}^*)\tr
\bar{\m{g}}_k.
\end{equation}

Combine (\ref{upper}), (\ref{x1}), and (\ref{x2}) to obtain
\begin{equation}\label{dbar}
0.5\sigma_H\|\m{x}_k-\m{x}^*\| \le \|\bar{\m{g}}_k\| =\|\m{g}^{\mc{I}}(\m{x}_k)\|= e(\m{x}_k).
\end{equation}

Finally, we combine \eqref{eqn:ppart1} with \eqref{dbar} to get $
e(\m{x}_k) \ge \theta^* E(\m{x}_k)$ for $k$ sufficiently large where $\theta^*=\frac{\sigma_H}{2U_H}$.
\end{proof}

The following corollary shows that \NPASA{} only perform the \NLCO{} within a finite number of iterations.

\begin{cor}\label{cor:iwe}
If \NPASA{} with $\epsilon_E= 0$ generates an infinite sequence of iterates
converging to  $\m{x}^*$ where the active constraint gradients are linearly independent and the strong second-order sufficient optimality condition holds,
then within a finite number of iterations, only \NLCO{} is executed.
\end{cor}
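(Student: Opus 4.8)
The plan is to combine the uniform positive lower bound supplied by Theorem~\ref{thm:swi} with the monotone geometric decay of the switching parameter $\theta$ in Algorithm~\ref{algo:npasa}. By Theorem~\ref{thm:swi}, under the stated hypotheses there exist $\theta^* > 0$ and an index $K_0$ with $e(\m{x}_k) \ge \theta^* E(\m{x}_k)$ for all $k \ge K_0$. Since $\{\m{x}_k\}$ is an infinite sequence generated with $\epsilon_E = 0$, each while-loop guard $E(\m{x}_k) > \epsilon_E$ forces $E(\m{x}_k) > 0$, so this inequality is strict whenever compared against a $\theta < \theta^*$.

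First I would establish that $\theta$ is reduced only finitely often. The parameter $\theta$ is altered solely through the update $\theta \leftarrow \mu \theta$ with $\mu \in (0,1)$, and this update fires exactly when the test $e(\m{x}_k) \le \theta E(\m{x}_k)$ holds; hence $\theta$ is non-increasing, and infinitely many reductions would drive $\theta \to 0$. If that occurred, then among the infinitely many reductions there would be one firing at some checkpoint with $k \ge K_0$ and pre-update value $\theta < \theta^*$. At that checkpoint $e(\m{x}_k) \le \theta E(\m{x}_k)$ holds, and combining with $e(\m{x}_k) \ge \theta^* E(\m{x}_k)$ and $E(\m{x}_k) > 0$ yields $\theta^* E(\m{x}_k) \le e(\m{x}_k) \le \theta E(\m{x}_k) < \theta^* E(\m{x}_k)$, a contradiction. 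Therefore there is a finite index past which $\theta$ is frozen at some $\theta_\infty > 0$, and for all sufficiently large $k$ the reduction test fails, i.e. $e(\m{x}_k) > \theta_\infty E(\m{x}_k)$.

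Then I would simply read off the branching rules. The branch from Phase~2 back to Phase~1 occurs precisely when $e(\m{x}_k) \le \theta_\infty E(\m{x}_k)$, which now fails for $k$ large, so once the iterates enter Phase~2 they remain there permanently. Conversely, if the algorithm sits in Phase~1 at such a large $k$, then after a single \GP{} step the test $e(\m{x}_k) > \theta_\infty E(\m{x}_k)$ holds (and no further reduction of $\theta$ occurs), sending the algorithm immediately to Phase~2. Consequently, after finitely many iterations the algorithm is trapped in Phase~2 and only \NLCO{} is executed, which is the claim.

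The substantive step, and the only real obstacle, is the finiteness of the reductions of $\theta$; everything else is bookkeeping of the switching logic. Once Theorem~\ref{thm:swi} provides the fixed lower bound $\theta^*$, the geometric factor $\mu$ cannot carry $\theta$ below $\theta^*$ while the switching test remains active, and the phase-switching rules then force permanent execution of \NLCO{}.
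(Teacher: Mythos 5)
Your proof is correct and takes essentially the same approach as the paper: both arguments rest on the lower bound $\theta^*$ from Theorem~\ref{thm:swi} and the observation that the geometric updates $\theta \leftarrow \mu\theta$ cannot drive $\theta$ below $\theta^*$ while the test $e(\m{x}_k) \le \theta E(\m{x}_k)$ keeps firing, so $\theta$ stabilizes and the branching rules trap the iterates in Phase~2. Your write-up is, if anything, slightly more careful than the paper's in isolating the finiteness of the $\theta$-reductions as the single substantive step.
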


\begin{proof}
First, we claim that the \NLCO{} step is taken within a finite number of iterations.
By Theorem~\ref{thm:swi}, there exists $\theta^* > 0$ such that $e(\m{x}_k) \ge \theta^* E(\m{x}_k)$. By contrary assume only \GP{} is executed, then $\theta$ is decreased in each iteration, and will be decreased to $\theta < \theta^*$ after a finite number of iterations which implies that phase one branches to phase two and phase two cannot branch to phase one. This is a contradiction. 

When the \NLCO{} step is taken, phase two cannot branch to phase one an infinite number of times. Otherwise, $\theta $ will be reduced to $\theta < \theta^*$ and  $e(\m{x}_k) \ge \theta E(\m{x}_k)$ will occur.
\end{proof}

\subsection{Convergence rate of \NPASA{}}

In the previous section, we have derived the asymptotic behavior of \NPASA{} and shown that \NPASA{} takes the second order step after finite number of iterations. We now aim to show the local quadratic convergence of \NPASA{} to a stationary point of \eqref{P}. 
\begin{theorem}\label{thm:rate:noncon}
Suppose $\m{x}^*$ is a stationary point where the active constraint gradients are linearly independent and the strong second-order sufficient optimality condition holds. Assume \NPASA{} with $\epsilon_E=0$ generates an infinite sequence of iterates $\m{x}_k$ converging to $\m{x}^*$ and the second-derivative matrix is Lipschitz continuous in a neighborhood of $\m{x}^*$, then
\begin{enumerate}[label=(\roman*)]
\item  \label{itm:rate:fir} 
 the sequence of projected gradient norms converges quadratically to zero; and
\item  \label{itm:rate:sec}  the rate of convergence of $\{\m{x}_k\}$ is quadratic.
\end{enumerate}
\end{theorem}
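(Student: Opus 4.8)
The plan is to show that, asymptotically, \NPASA{} reduces to a pure \emph{unit-step} Newton iteration on a fixed face, so that the classical quadratic convergence theory applies, and then to transfer that rate to the projected gradient via a two-sided bound between $E(\m{x}_k)$ and $\|\m{x}_k-\m{x}^*\|$. First I would pin down the asymptotic active set. By Corollary~\ref{cor:iwe}, only \NLCO{} runs for all $k$ large; since \eqref{eqn:act:seq} makes $\mc{A}(\m{x}_k)$ nondecreasing while Lemma~\ref{thm:iden} gives $\mc{A}_+(\m{x}^*)\subseteq\mc{A}(\m{x}_k)\subseteq\mc{A}(\m{x}^*)$, the active set must stabilize to a fixed $\mc{I}$ with $\mc{A}_+(\m{x}^*)\subseteq\mc{I}\subseteq\mc{A}(\m{x}^*)$. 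Because $\mc{A}_+(\m{x}^*)\subseteq\mc{I}$, every direction in $\mc{N}(\m{A}_{\mc{I}})$ is admissible in \eqref{opt2}, so $\m{H}^{\mc{I}}(\m{x}^*)=\m{Z}_{\m{A}_{\mc{I}}}\tr\nabla^2 f(\m{x}^*)\m{Z}_{\m{A}_{\mc{I}}}\succeq\sigma_H\m{I}$; by continuity of the Hessian, $\sigma_k=\sigma_{\min}(\m{H}^{\mc{I}}(\m{x}_k))\ge\tfrac12\sigma_H>\epsilon_H$ for $k$ large (which requires $\epsilon_H<\sigma_H$), so \NLCO{} selects the pure Newton step \eqref{eq:newton}. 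I would also record that $\m{Z}_{\m{A}_{\mc{I}}}\tr\m{g}(\m{x}^*)=\m{0}$: by \eqref{x-opt}, $-\m{g}(\m{x}^*)=\m{A}\tr\g{\lambda}^*$ with $\g{\lambda}^*$ supported on $\mc{A}_+(\m{x}^*)\subseteq\mc{I}$, and $\m{A}_{\mc{I}}\m{Z}_{\m{A}_{\mc{I}}}=\m{0}$ by \eqref{eqn:zprop}.

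Next I would show that the unit step $s_k=1$ is accepted. For the Newton direction $\m{d}_k=\m{Z}_{\m{A}_{\mc{I}}}\m{p}_k^n$ one has $\phi_k''(0)=(\m{p}_k^n)\tr\m{H}^{\mc{I}}(\m{x}_k)\m{p}_k^n>0$ and $\phi_k'(0)=-\phi_k''(0)$, so $\min\{\phi_k''(0),0\}=0$ and $\psi_k\equiv\phi_k'(0)$. A third-order Taylor expansion controlled by the Lipschitz constant of $\nabla^2 f$ gives $\phi_k(1)-\phi_k(0)=-\tfrac12\phi_k''(0)+O(\|\m{d}_k\|^3)$ and $\phi_k'(1)=O(\|\m{d}_k\|^3)$; since $\phi_k''(0)\ge\tfrac12\sigma_H\|\m{d}_k\|^2$ while $\|\m{d}_k\|\to0$, both the sufficient-decrease test \eqref{eq:line1} (for $\delta_1<\tfrac12$) and the curvature test \eqref{eq:line2} hold at $s=1$ for $k$ large, so the linesearch returns $s_k=1$. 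Writing $\m{x}_k=\m{x}^*+\m{Z}_{\m{A}_{\mc{I}}}\m{z}_k$, the iteration is then exactly Newton's method for the equation $\m{Z}_{\m{A}_{\mc{I}}}\tr\m{g}(\m{x}^*+\m{Z}_{\m{A}_{\mc{I}}}\m{z})=\m{0}$, whose Jacobian $\m{H}^{\mc{I}}$ is nonsingular and Lipschitz; the classical local theory yields $\|\m{x}_{k+1}-\m{x}^*\|\le C\|\m{x}_k-\m{x}^*\|^2$, which is \ref{itm:rate:sec}.

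Finally, for \ref{itm:rate:fir} I would sandwich $E(\m{x}_k)$ between constant multiples of $\|\m{x}_k-\m{x}^*\|$. The upper bound $E(\m{x}_k)\le U_H\|\m{x}_k-\m{x}^*\|$ is already \eqref{eqn:ppart1}. For the lower bound, note that the face tangent space $\mc{N}(\m{A}_{\mc{I}})$ is a subspace contained in $\mc{T}(\m{x}_k)$, so (by a Moreau-decomposition argument) the projection of $-\m{g}(\m{x}_k)$ onto the cone $\mc{T}(\m{x}_k)$ is at least as large in norm as its projection onto $\mc{N}(\m{A}_{\mc{I}})$; the latter equals $\|\m{Z}_{\m{A}_{\mc{I}}}\tr\m{g}(\m{x}_k)\|=e(\m{x}_k)$, whence $E(\m{x}_k)\ge e(\m{x}_k)\ge\tfrac12\sigma_H\|\m{x}_k-\m{x}^*\|$ by \eqref{dbar}. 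Combining both bounds with \ref{itm:rate:sec} gives $E(\m{x}_{k+1})\le U_H\|\m{x}_{k+1}-\m{x}^*\|\le U_HC\|\m{x}_k-\m{x}^*\|^2\le (4U_HC/\sigma_H^2)\,E(\m{x}_k)^2$, the desired quadratic rate.

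I expect the main obstacle to be the linesearch step: verifying rigorously that the two-sided conditions \eqref{eq:line1}--\eqref{eq:line2} return \emph{exactly} $s_k=1$ (rather than a merely superlinear fraction of it), which forces the Armijo restriction $\delta_1<\tfrac12$ and a careful estimate of the Taylor remainder through the Lipschitz Hessian. A secondary technical point is confirming that full Newton steps stay feasible and activate no new constraint, which I would derive from the stability of $\mc{A}(\m{x}_k)$ together with the strict inactivity of the remaining constraints near $\m{x}^*$.
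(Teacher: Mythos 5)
Your proposal is correct, and its skeleton — Corollary~\ref{cor:iwe} and Lemma~\ref{thm:iden} plus the strong second-order condition to force unit Newton steps on a face, followed by the Lipschitz--Hessian estimate and the lower bound \eqref{dbar} — is the same as the paper's. The execution differs in two respects. For part~\ref{itm:rate:sec}, you let the active set stabilize and identify the iteration with classical Newton's method for the reduced equation $\m{Z}_{\m{A}_{\mc{I}}}\tr\m{g}(\m{x}^*+\m{Z}_{\m{A}_{\mc{I}}}\m{z})=\m{0}$ (legitimate, since $\mc{I}\subseteq\mc{A}(\m{x}^*)$ places $\m{x}_k-\m{x}^*$ in $\mc{N}(\m{A}_{\mc{I}})$ and $\m{Z}_{\m{A}_{\mc{I}}}\tr\m{g}(\m{x}^*)=\m{0}$), whereas the paper never invokes the classical theorem: it proves the one-step contraction $\|\m{g}^{\mc{I}}(\m{x}_{k+1})\|\le\frac{L_H}{2\sigma_H^2}\|\m{g}^{\mc{I}}(\m{x}_k)\|^2$ directly and converts it into a rate for $\|\m{x}_k-\m{x}^*\|$ via \eqref{dbar} and the Taylor bound \eqref{eqn:bound:denum}; the two computations are the same estimate packaged differently, the paper's version having the minor advantage of not needing the active set to be literally constant. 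For part~\ref{itm:rate:fir}, your two-sided bound $\tfrac{\sigma_H}{2}\|\m{x}_k-\m{x}^*\|\le e(\m{x}_k)\le E(\m{x}_k)\le U_H\|\m{x}_k-\m{x}^*\|$ is a genuine improvement: the cone-monotonicity inequality $E\ge e$ (correct, by the support-function characterization of the norm of the projection onto nested closed convex cones) yields the quadratic rate for the true projected gradient $E(\m{x}_k)$, whereas the paper establishes \eqref{eq:quad:e} only for the reduced gradient $e(\m{x}_k)=\|\m{g}^{\mc{A}}(\m{x}_k)\|$ and then asserts the conclusion. You are also more explicit than the paper about unit-step acceptance and feasibility of the full step, which the paper dispatches by citing Mor\'{e} and Sorensen; the restriction $\delta_1<\tfrac12$ you impose is the standard price for Armijo acceptance of the unit Newton step and is implicit in that citation as well.
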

\begin{proof}

\ref{itm:rate:fir}. It follows from Corollary~\ref{cor:iwe} that within a finite number of iterations, only  \NLCO{} is executed. Let $\mc{I}=\mc{A}(\m{x}_k)$ and $\mc{S}=\mc{A}_+(\m{x}^*)$. By Lemma~\ref{thm:iden}, we have  $ \mc{S} \subset \mc{I}$ which implies that the range space of $\m{Z}_{\m{A}_{\mc{I}}}$ is contained in the range space of $\m{Z}_{\m{A}_{\mc{S}}}$.  This, together with the strong second-order sufficient optimality condition \eqref{opt2} implies that $
\m{H}^{\mc{I}}(\m{x}^*) = \m{Z}_{\m{A}_{\mc{I}}}^\top \m{H}(\m{x}^*)\m{Z}_{\m{A}_{\mc{I}}}
$ is positive definite for all $k$ sufficiently large. From the continuity of $ \nabla^2f(\m{x})$, it follows that the reduced Hessian $ \m{H}^{\mc{I}}(\m{x}_k) = \m{Z}_{\m{A}_{\mc{I}}}^\top \m{H}(\m{x}_k)\m{Z}_{\m{A}_{\mc{I}}}$ is positive definite with smallest eigenvalue bounded below by $\sigma_H > 0$. Thus, Algorithm~\ref{algo:npasa} computes the Newton direction and the unit step length is admissible for $k$ large enough \cite{more1982newton}. 

Now, from \eqref{eq:newton}, we obtain
\begin{eqnarray}\label{eq:UA:b2b}
\nonumber
\|\m{d}_k\| &=&  \| \m{Z}_{\m{A}_{\mc{I}}} \big(\m{Z}_{\m{A}_{\mc{I}}}^\top  \m{H}(\m{x}_k) \m{Z}_{\m{A}_{\mc{I}}}\big)^{-1} \m{Z}_{\m{A}_{\mc{I}}}^\top \m{g}(\m{x}_k) \| \\ 
 &\leq &  \frac{1}{\sigma_H}  \|\m{Z}_{\m{A}_{\mc{I}}} \m{Z}_{\m{A}_{\mc{I}}}^\top \m{g}(\m{x}_k)\| = \frac{1}{\sigma_H} \|\m{g}^{\mc{I}}(\m{x}_k)\|,
\end{eqnarray}
where the inequality follows since the columns of $\m{Z}_{\m{A}_{\mc{I}}}$ form an orthonormal basis for the subspace $\mc{N}(\m{A}_{\mc{I}})$ and $\sigma_{\min}(\m{Z}_{\m{A}_{\mc{I}}}^\top  \m{H}(\m{x}_k) \m{Z}_{\m{A}_{\mc{I}}}) \geq \sigma_H>0$.

Let $L_H$ denote the Lipschitz constant of $\nabla^2 f(\m{x})$ for $\m{x}$ near $\m{x}^*$. Choose $k$ large enough such that $\m{x}_k$ is in a neighborhood of $\m{x}^*$. From the Lipschitz continuity of  $\nabla^2 f(\m{x})$, we have
\begin{eqnarray}\label{eq:super}
\nonumber
\|\m{g}^{\mc{I}}(\m{x}_{k+1}) \| &=& \|\m{Z}_{\m{A}_{\mc{I}}}^\top(\m{g}(\m{x}_{k+1})- \m{g}(\m{x}_k) - \m{H}(\m{x}_k) \m{d}_k)\|\\
 \nonumber
% &=& \|\m{Z}_{\m{A}_{\mc{I}}} \m{Z}_{\m{A}_{\mc{I}}}^\top \big(\m{g}(\m{x}_{k+1})- \m{g}(\m{x}_k) - \m{H}(\m{x}_k) \m{d}_k \big)\| \\ \nonumber
& \leq & \frac{L_H}{2}  \|\m{d}_k \|^2 \\  
&\leq & \frac{L_H}{2\sigma_H^2}  \|\m{g}^{\mc{I}}(\m{x}_k)\|^2, 
\end{eqnarray} 
where the second inequality follows from \eqref{eq:UA:b2b}.

In Algorithm~\ref{algo:nlco}, we have $\mc{I}=\mc{A}(\m{x}_{k})  \subset \mc{A}(\m{x}_{k+1})=\mc{J}$ which implies that
\begin{eqnarray}\label{eq:UA:b22}
 \|\m{g}^{\mc{J}}(\m{x}_{k+1}) \| \leq \|\m{g}^{\mc{I}}(\m{x}_{k+1})\|.
\end{eqnarray}
This inequality combined with \eqref{eq:super} yields
\begin{eqnarray}\label{eq:quad:e}
 \|\m{g}^{\mc{A}}(\m{x}_{k+1}) \| = \|\m{g}^{\mc{J}}(\m{x}_{k+1}) \|  \leq  \frac{L_H }{2\sigma_H^2} \|\m{g}^{\mc{I}}(\m{x}_k)\|^2 = \frac{L_H }{2\sigma_H^2}   \|\m{g}^{\mc{A}}(\m{x}_{k}) \|^2.
\end{eqnarray}
This essentially implies that the sequence of projected gradient norms converges quadratically to zero.

\ref{itm:rate:sec}. From \eqref{dbar} for $k$ sufficiently large, we have
\begin{eqnarray}\label{eq:gx}
 \|\m{x}_{k+1} - \m{x}^*\|  \leq  \frac{2}{\sigma_H}  \|\m{g}^{\mc{A}}(\m{x}_{k+1}) \|  \leq \frac{L_H}{\sigma_H^3 }  \|\m{g}^{\mc{A}}(\m{x}_{k}) \|^2,
\end{eqnarray}
where the second inequality follows from \eqref{eq:quad:e}.

By Lemma~\ref{thm:iden}, we have  $\mc{S}=\mc{A}_+(\m{x}^*)  \subset  \mc{A}(\m{x}_k)$ which implies that 
\begin{eqnarray}\label{eqn:sa}
\|\m{g}^{\mc{A}}(\m{x}_{k}) \| \leq  \|\m{g}^{\mc{S}}(\m{x}_{k}) \|.
\end{eqnarray}
Combine \eqref{eq:gx} with \eqref{eqn:sa} to get
\begin{eqnarray}\label{eqn:bound:num}
 \|\m{x}_{k+1} - \m{x}^*\| \leq \frac{L_H}{\sigma_H^3} \|\m{g}^{\mc{S}}(\m{x}_{k}) \|^2.
\end{eqnarray}
Since $\m{g}^{\mc{S}}(\m{x}^*) =0$ and the columns of $\m{Z}_{\m{A}_{\mc{S}}}$ form an orthonormal basis for the subspace $\mc{N}(\m{A}_{\mc{S}})$, we have
\begin{eqnarray} \label{eqn:bound:denum}
\nonumber
\|\m{g}^{\mc{S}}(\m{x}_{k}) \| &\leq&  \| \m{Z}_{\m{A}_{\mc{S}}}^\top (\m{H}(\m{x}^*)(\m{x}_k-\m{x}^*))\| + \| \m{Z}_{\m{A}_{\mc{S}}}^\top  (\m{g}(\m{x}_{k})- \m{g}(\m{x}^*) - \m{H}(\m{x}^*) (\m{x}_k-\m{x}^*))\| \\
\nonumber
% &\leq& \|\m{Z}_{\m{A}_{\mc{S}}} (\m{H}(\m{x}^*)(\m{x}_k-\m{x}^*))\| + \| \m{Z}_{\m{A}_{\mc{S}}}  (\m{g}(\m{x}_{k})- \m{g}(\m{x}^*) - \m{H}(\m{x}^*) (\m{x}_k-\m{x}^*))\| \\
% \nonumber
& \leq &   \| \m{Z}_{\m{A}_{\mc{S}}}^\top  (\m{H}(\m{x}^*)(\m{x}_k-\m{x}^*))\|  + \frac{L_H}{2} \|\m{Z}_{\m{A}_{\mc{S}}}^\top (\m{x}_k-\m{x}^*)\|  \\
\nonumber
& = &   \|(\m{Z}_{\m{A}_{\mc{S}}}^\top \m{H}(\m{x}^*) \m{Z}_{\m{A}_{\mc{S}}})  \m{Z}_{\m{A}_{\mc{S}}}^\top  (\m{x}_k-\m{x}^*)\|  + \frac{L_H}{2} \|\m{Z}_{\m{A}_{\mc{S}}}^\top(\m{x}_k-\m{x}^*)\|  \\
&\leq &  (0.5L_H + \kappa_H ) \|\m{x}_k-\m{x}^*\|,
\end{eqnarray}
where the third inequality follows from the Lipschitz continuity of $\m{H}(\m{x})$ in a neighborhood of $\m{x}^*$; and the last inequality follows since $\m{Z}_{\m{A}_{\mc{S}}}  \m{Z}_{\m{A}_{\mc{S}}}^\top  (\m{x}_k-\m{x}^*) =\m{x}_k-\m{x}^*$ and 
\begin{equation*}%\label{eq:kappah}
\kappa_H := \|\m{Z}_{\m{A}_{\mc{S}}}^\top \m{H}(\m{x}^*) \m{Z}_{\m{A}_{\mc{S}}}\|= \| \m{H}^{\mc{S}}(\m{x}^*)\|.
\end{equation*}

Now, we combine \eqref{eqn:bound:denum} with \eqref{eqn:bound:num} to get
\begin{equation*}
\frac{\|\m{x}_{k+1} -\m{x}^*\|}{ \|\m{x}_k-\m{x}^*\|^2} \leq M,
\end{equation*}
where
$$
 M= \frac{L_H (0.5L_H +  \kappa_H )^2}{	\sigma_H^3}.
$$
Thus, the sequence $\{\m{x}_k\}$ generated by \NPASA{} converges quadratically to $\m{x}^*$.
\end{proof}

\subsection{Convergence rate of inexact NPASA}

In Section~\ref{sec:2}, we have assumed that certain linear-algebra operations in Algorithm~\ref{algo:npasa}--the linear system solves of \eqref{eq:newton} and the computation of the negative curvature direction are performed exactly. In a large-scale setting, the cost of these operations can be prohibitive, so iterative techniques that perform these operations inexactly are of interest. Therefore, we focus our attention on iterative methods such as the conjugate gradient (CG). The CG method is the most popular iterative method for linear systems due to its rich convergence theory and strong practical performance. It has also been popular in the context of nonconvex smooth minimization \cite{nocedal2006numerical,curtis2019exploiting,royer2020newton}. It requires only matrix-vector operations involving the coefficient matrix (often these can be found or approximated without explicit knowledge of the matrix) together with some vector operations. 

\begin{algorithm}[t]
\caption{Newton-CG Linearly Constrained Optimizer (\NLCOCG)}\label{algo:nlcocg}
\begin{algorithmic}%[1]
\STATE \textbf{Parameters}: 
$  \epsilon_H \in [0, \infty);  \epsilon_R \in (0, \infty)$; $\eta,  \delta_1 \in (0, 1)$; and $\delta_2 \in [\delta_1, 1)$.  
\FOR{$k=1, 2 , \ldots $}
\STATE $\m{x}_k \in \Omega$ and $\mc{I} = \mc{A}(\m{x}_k)$;
\STATE Evaluate the reduced gradient $\m{g}^{\mc{I}}(\m{x}_k) $ and the reduced Hessian $\m{H}^{\mc{I}} (\m{x}_k)$;
\STATE  Apply the CG algorithm to find an
approximation solution $\m{p}_k $ to 
\begin{align}\label{eq:cgnewton}
\m{H}^{\mc{A}}(\m{x}_k) \m{p}_k = -\m{g}^{\mc{A}}(\m{x}_k);
\end{align}
Set $\m{d}_k= \m{Z}_{\m{A}_{\mc{I}}} \m{p}_k$;
\STATE Find the stepsize $s_k$ such that \eqref{eq:line1} and \eqref{eq:line2} hold;
\STATE Update $\m{x}_{k+1}  = \m{x}_k+ s_k \m{d}_k$;
\ENDFOR
\end{algorithmic}
\end{algorithm}

We consider a CG variant of \NPASA{} (called \NPASACG{}) which employs either the iteration of the \GP{} algorithm or the iteration of Algorithm~\ref{algo:nlcocg} (\NLCOCG{}) by given rules. The \NLCOCG{} employs the CG method for finding an approximate solution of the reduced system 
\eqref{eq:cgnewton}.

If $\m{H}^{\mc{A}}(\m{x}_k) \succ 0$, then it is well known that the CG method solves \eqref{eq:cgnewton} in at most $n-|\mc{A}(\m{x}_k)|$ iterations (the dimension of the reduced Hessian matrix). However, if $\m{H}^{\mc{A}}(\m{x}_k) \not \succ 0$, then the method might encounter a direction of nonpositive curvature, say $\m{u}_k$, such that $\m{u}_k^\top \m{H}^{\mc{A}}(\m{x}_k) \m{u}_k \leq 0$. If this occurs, then we terminate CG immediately and set $\m{d}_k = \m{Z}_{\m{A}_{\mc{I}}}\m{u}_k$ where $\mc{I}= \mc{A}(\m{x}_k)$. This choice is made rather than spend any extra computational effort attempting to approximate an eigenvector corresponding to the left-most eigenvalue of $\m{H}^{\mc{A}}(\m{x}_k)$. Otherwise, if no such direction of nonpositive curvature is encountered, then the algorithm sets $ \m{d}_k = \m{Z}_{\m{A}_{\mc{I}}} \m{p}_k$, where $ \m{p}_k$ is the final CG iterate computed prior to termination.   

Next, we provide the local convergence of \NPASACG{} to a stationary point $\m{x}^*$ where the strong second-order sufficient optimality condition holds. We require that the search direction $\m{d}_k$ satisfies
\begin{equation}\label{eqn:cg:cond}
\|\m{H}^{\mc{A}}(\m{x}_k) \m{p}_k +\m{g}^{\mc{A}}(\m{x}_k)\| \leq \xi_k \| \m{g}^{\mc{A}}(\m{x}_k)\|
\end{equation}
for some sequence $\xi_k$.

\begin{theorem}\label{thm:rate:npasacg}
Suppose $\m{x}^*$ is a stationary point where the active constraint gradients are linearly independent and the strong second-order sufficient optimality condition holds. Assume \NPASACG{} with $\epsilon_E=0$ generates an infinite sequence of iterates $\m{x}_k$ converging to $\m{x}^*$. Let the search direction $\m{d}_k$ be calculated by the CG method outlined above and \eqref{eqn:cg:cond} hold for some $\xi_k$ converging to $\xi^*$. Then,

\begin{enumerate}[label=(\roman*)]
\item  \label{itm:ratein:fir} 
 \NPASACG{} with unit step lengths converges linearly to $\m{x}^*$ 
 when $\xi^*$ is sufficiently small.
\item  \label{itm:ratein:sec} If $\xi^*=0$, \NPASACG{} with unit step lengths converges superlinearly to $\m{x}^*$.
\end{enumerate}
\end{theorem}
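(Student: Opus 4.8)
The plan is to leverage the machinery already built in Theorem~\ref{thm:rate:noncon}. By Corollary~\ref{cor:iwe}, only \NLCOCG{} is executed for $k$ large, and by Lemma~\ref{thm:iden} together with the strong second-order condition \eqref{opt2}, the reduced Hessian $\m{H}^{\mc{I}}(\m{x}_k)$ is positive definite with smallest eigenvalue bounded below by $\sigma_H > 0$ for $k$ sufficiently large, where $\mc{I} = \mc{A}(\m{x}_k)$. Since $\m{H}^{\mc{I}}(\m{x}_k) \succ 0$, the CG method encounters no nonpositive curvature direction, so the algorithm always sets $\m{d}_k = \m{Z}_{\m{A}_{\mc{I}}}\m{p}_k$ with $\m{p}_k$ satisfying the inexact Newton condition \eqref{eqn:cg:cond}. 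Thus asymptotically \NPASACG{} reduces to an inexact (truncated) Newton method on the reduced problem, and both parts reduce to the standard Dembo--Eisenstat--Steihaug analysis of inexact Newton methods, adapted to the reduced-space iterates.

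\textbf{For part \ref{itm:ratein:fir}}, I would first establish the key residual recursion. Writing the CG residual as $\m{r}_k := \m{H}^{\mc{A}}(\m{x}_k)\m{p}_k + \m{g}^{\mc{A}}(\m{x}_k)$, condition \eqref{eqn:cg:cond} gives $\|\m{r}_k\| \le \xi_k\|\m{g}^{\mc{A}}(\m{x}_k)\|$. With unit step length $\m{x}_{k+1} = \m{x}_k + \m{Z}_{\m{A}_{\mc{I}}}\m{p}_k$, I would expand
\[
\m{g}^{\mc{I}}(\m{x}_{k+1}) = \m{Z}_{\m{A}_{\mc{I}}}^\top\big(\m{g}(\m{x}_{k+1}) - \m{g}(\m{x}_k) - \m{H}(\m{x}_k)\m{d}_k\big) + \m{Z}_{\m{A}_{\mc{I}}}^\top \m{H}(\m{x}_k)\m{d}_k + \m{g}^{\mc{I}}(\m{x}_k),
\]
where the last two terms collapse to the residual $\m{r}_k$ by the definition of $\m{d}_k$ and the reduced quantities. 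The first term is controlled, as in \eqref{eq:super}, by continuity of $\nabla^2 f$ and the bound $\|\m{p}_k\| \le \sigma_H^{-1}(1+\xi_k)\|\m{g}^{\mc{I}}(\m{x}_k)\|$ coming from \eqref{eqn:cg:cond} and positive definiteness. Combining, and using \eqref{eq:UA:b22} to pass from $\mc{I}$ to $\mc{J} = \mc{A}(\m{x}_{k+1})$, I would obtain a bound of the form $\|\m{g}^{\mc{A}}(\m{x}_{k+1})\| \le (\xi_k + o(1))\|\m{g}^{\mc{A}}(\m{x}_k)\|$. Since $\xi_k \to \xi^*$, whenever $\xi^*$ is small enough this contraction factor is eventually below one, giving linear convergence of the projected-gradient norms; translating to iterates via \eqref{eq:gx} and \eqref{eqn:bound:denum} (which hold verbatim here since $\mc{S} = \mc{A}_+(\m{x}^*) \subset \mc{I}$) yields linear convergence of $\{\m{x}_k\}$.

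\textbf{For part \ref{itm:ratein:sec}}, the same recursion with $\xi^* = 0$ makes the leading factor $o(1)$, so $\|\m{g}^{\mc{A}}(\m{x}_{k+1})\| = o(\|\m{g}^{\mc{A}}(\m{x}_k)\|)$, i.e.\ superlinear convergence of the projected gradients, which transfers to the iterates exactly as in the proof of Theorem~\ref{thm:rate:noncon}\ref{itm:rate:sec}. \textbf{The main obstacle} I anticipate is twofold: first, justifying rigorously that the unit step length is admissible for the linesearch conditions \eqref{eq:line1}--\eqref{eq:line2} in the inexact setting (in Theorem~\ref{thm:rate:noncon} this used \cite{more1982newton} for the exact Newton direction, and one must verify the inexact direction remains a sufficient descent direction with $\phi_k''(0) > 0$ eventually), and second, handling the interaction between the varying active set $\mc{I} = \mc{A}(\m{x}_k)$ and the asymptotic constancy guaranteed by \eqref{eqn:act:seq} and \eqref{eqn:iden3}, so that the reduced Hessian's uniform positive-definiteness and the constant $\sigma_H$ are genuinely valid along the tail of the sequence rather than merely at $\m{x}^*$.
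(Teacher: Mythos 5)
Your proposal is correct and follows essentially the same route as the paper's proof: the same residual decomposition of $\m{g}^{\mc{I}}(\m{x}_{k+1})$ into a Taylor-remainder term plus the CG residual controlled by \eqref{eqn:cg:cond}, the same bound $\|\m{d}_k\| = O(\sigma_H^{-1}\|\m{g}^{\mc{I}}(\m{x}_k)\|)$ from uniform positive definiteness of the reduced Hessian, and the same transfer to the iterates via \eqref{dbar} and the bound $\|\m{g}^{\mc{S}}(\m{x}_k)\| \le (\kappa_H + o(1))\|\m{x}_k - \m{x}^*\|$, yielding the contraction ratio $2\kappa_H\xi^*/\sigma_H$ in the limit. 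The obstacles you flag (unit-step admissibility and active-set stability) are handled in the paper only by appeal to the argument of Theorem~\ref{thm:rate:noncon}, so your treatment is no less complete than the original.
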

\begin{proof}
 Let $\mc{I}=\mc{A}(\m{x}_k)$. The main estimate needed for the rate of convergence result is obtained by noting that
\begin{eqnarray*}%\label{cgeq:super}
\|\m{g}^{\mc{I}}(\m{x}_{k+1})\| &=& \|\m{Z}_{\m{A}_{\mc{I}}}^\top \m{g}(\m{x}_{k+1}) \| \\ 
&\leq &   \|\m{Z}_{\m{A}_{\mc{I}}}^\top\big(\m{g}(\m{x}_k) + \m{H}(\m{x}_k)\m{d}_k \big)\|  + \|\m{Z}_{\m{A}_{\mc{I}}}  ^\top\big(\m{g}(\m{x}_{k+1})- \m{g}(\m{x}_k) - \m{H}(\m{x}_k) \m{d}_k\big) \|.
\end{eqnarray*}
Assumption \eqref{eqn:cg:cond} on the step, and standard bounds yield that
\begin{eqnarray}\label{cgeq:super}
 \Big\|\m{g}^{\mc{I}}(\m{x}_{k+1}) \Big\| &\leq& \xi_k  \|\m{g}^{\mc{I}}(\m{x}_k)\| + \tau_k \|\m{d}_k\|
\end{eqnarray}
for some sequence $\tau_k$ converging to zero.    

By the same argument as in the proof of Theorem~\ref{thm:rate:noncon}, we obtain $\sigma_{\min}(\m{H}^{\mc{I}}(\m{x}_k)) \geq \sigma_H>0$. Let $\m{r}(\m{x}_k):= \m{g}(\m{x}_k) + \m{H}(\m{x}_k)\m{d}_k$.  We have 
\begin{eqnarray}\label{cgeq:UA:b2b}
\nonumber
\|\m{d}_k\|  &=& \|  \m{Z}_{\m{A}_{\mc{I}}}  (\m{Z}_{\m{A}_{\mc{I}}}^\top  \m{H}(\m{x}_k) \m{Z}_{\m{A}_{\mc{I}}})^{-1} \big( -\m{Z}_{\m{A}_{\mc{I}}}^\top \m{g}(\m{x}_k) + \m{Z}_{\m{A}_{\mc{I}}}^\top \m{r}(\m{x}_k)\big)\| \\ 
%\nonumber
%& \leq & \|\big(\m{H}^{\mc{I}}(\m{x}_k)\big)^{-1}\| \|  \m{Z}_{\m{A}_{\mc{I}}}  \big(\m{Z}_{\m{A}_{\mc{I}}}^\top \m{g}(\m{x}_k) - \m{Z}_{\m{A}_{\mc{I}}}^\top \m{r}(\m{x}_k) \big) \| \\  
\nonumber
& \leq &  \| \big(\m{H}^{\mc{I}}(\m{x}_k)\big)^{-1} \|\| -\m{g}^{\mc{I}}(\m{x}_k) + \m{Z}_{\m{A}_{\mc{I}}}^\top \m{r}(\m{x}_k)\|  \\
\nonumber
& \leq & \frac{1}{\sigma_H} \sqrt{ \|\m{g}^{\mc{I}}(\m{x}_k)\|^2 +\| \m{Z}_{\m{A}_{\mc{I}}}^\top\m{r}(\m{x}_k) \|^2} \\
& \leq  &\frac{\sqrt{1+ \xi_k^2}} {\sigma_H} \|\m{g}^{\mc{I}}(\m{x}_k)\|,
\end{eqnarray}
where the first inequality follows since the columns of $\m{Z}_{\m{A}_{\mc{I}}}$ form an orthonormal basis for the subspace $\mc{N}(\m{A}_{\mc{I}})$; the second inequality uses $\sigma_{\min}(\m{H}^{\mc{I}}(\m{x}_k)) \geq \sigma_H>0$; and the last inequality follows form \eqref{eqn:cg:cond}.

We combine \eqref{cgeq:super} with \eqref{cgeq:UA:b2b} to obtain
\begin{equation}\label{ccgeq:super}
\|\m{g}^{\mc{I}}(\m{x}_{k+1}) \| \leq  \xi_k  \|\m{g}^{\mc{I}}(\m{x}_k)\| + \frac{\tau_k\sqrt{1+ \xi_k^2}} {\sigma_H} \|\m{g}^{\mc{I}}(\m{x}_k)\|.
\end{equation}
In Algorithm~\ref{algo:nlco}, we have $\mc{I}=\mc{A}(\m{x}_{k})  \subset \mc{A}(\m{x}_{k+1})=\mc{J}$ which implies that
\begin{eqnarray}\label{cgeq:UA:b22}
 \|\m{g}^{\mc{J}}(\m{x}_{k+1}) \| \leq \|\m{g}^{\mc{I}}(\m{x}_{k+1})\|.
\end{eqnarray}
This inequality combined with \eqref{ccgeq:super} yields
\begin{eqnarray}\label{cgeq:quad:e}
 \|\m{g}^{\mc{A}}(\m{x}_{k+1}) \|  \leq  \xi_k  \|\m{g}^{\mc{A}}(\m{x}_k)\| + \frac{\tau_k\sqrt{1+ \xi_k^2}} {\sigma_H} \|\m{g}^{\mc{A}}(\m{x}_k)\|.
\end{eqnarray}

Now, let  $\mc{S} = \mc{A_+}(\m{x}^*)$. Since  $\m{g}^{\mc{S}}(\m{x}^*) =0$, we have
\begin{eqnarray*}
\|\m{g}^{\mc{S}}(\m{x}_{k}) \| &\leq&  \| \m{Z}_{\m{A}_{\mc{S}}}^\top (\m{H}(\m{x}^*)(\m{x}_k-\m{x}^*))\| + \|\m{Z}_{\m{A}_{\mc{S}}}^\top(\m{g}(\m{x}_{k})- \m{g}(\m{x}^*) - \m{H}(\m{x}^*) (\m{x}_k-\m{x}^*))\|.
\end{eqnarray*}
The standard estimates of the last term show that
\begin{eqnarray} \label{cgeqn:bound:denum}
\nonumber
\|\m{g}^{\mc{S}}(\m{x}_{k}) \| & \leq &  \| \m{Z}_{\m{A}_{\mc{S}}}^\top (\m{H}(\m{x}^*)(\m{x}_k-\m{x}^*))\| + \tau_k \|\m{Z}_{\m{A}_{\mc{S}}}^\top (\m{x}_k-\m{x}^*)\|  \\
\nonumber
&\leq &  \|(\m{Z}_{\m{A}_{\mc{S}}}^\top \m{H}(\m{x}^*) \m{Z}_{\m{A}_{\mc{S}}})  \m{Z}_{\m{A}_{\mc{S}}}^\top  (\m{x}_k-\m{x}^*)\| + \tau_k \| \m{Z}_{\m{A}_{\mc{S}}}^\top (\m{x}_k-\m{x}^*)\|
\\
\nonumber
&\leq &  (\kappa_H +\tau_k ) \|\m{Z}_{\m{A}_{\mc{S}}}^\top (\m{x}_k-\m{x}^*)\|\\
&\leq & (\kappa_H +\tau_k ) \|\m{x}_k-\m{x}^*\|,
\end{eqnarray}
where $$ \kappa_H = \| \m{Z}_{\m{A}_{\mc{S}}}^\top \m{H}(\m{x}^*) \m{Z}_{\m{A}_{\mc{S}}}  \| $$ and the last inequality follows since $\m{Z}_{\m{A}_{\mc{S}}}  \m{Z}_{\m{A}_{\mc{S}}}^\top  (\m{x}_k-\m{x}^*) =\m{x}_k-\m{x}^*$ for $k$ sufficiently large.

Now, from \eqref{dbar} for $k$ sufficiently large, we have
\begin{eqnarray}
\nonumber
 \|\m{x}_{k+1} - \m{x}^*\|  \leq \frac{2}{\sigma_H}  \|\m{g}^{\mc{A}}(\m{x}_{k+1}) \| ,
\end{eqnarray}
which together with \eqref{cgeq:quad:e} implies that
\begin{eqnarray*}
 \|\m{x}_{k+1} - \m{x}^*\|   \leq   \frac{2 \widehat{\kappa}_k}{\sigma_H}  \|\m{g}^{\mc{A}}(\m{x}_{k}) \|,
\end{eqnarray*}
where  
$$
 \widehat{\kappa}_k = \xi_k + \frac{\tau_k\sqrt{1+ \xi_k^2}} {\sigma_H}.  
$$
By Lemma~\ref{thm:iden}, we have  $\mc{S}=\mc{A}_+(\m{x}^*)  \subset  \mc{A}(\m{x}_k) = \mc{I}$ for all $k$ sufficiently large which yields $ \|\m{g}^{\mc{I}}(\m{x}_{k}) \| \leq  \|\m{g}^{\mc{S}}(\m{x}_{k}) \|$. Thus, 
\begin{eqnarray}\label{cgeqn:bound:num}
 \|\m{x}_{k+1} - \m{x}^*\| \leq \frac{ 2\widehat{\kappa}_k}{\sigma_H} \|\m{g}^{\mc{S}}(\m{x}_{k}) \|.
\end{eqnarray}
Combine  \eqref{cgeqn:bound:denum} with \eqref{cgeqn:bound:num} to get 
\begin{equation*}
\frac{\|\m{x}_{k+1} -\m{x}^*\|}{ \|\m{x}_k-\m{x}^*\|} \leq  M_k,
\end{equation*} 
where 
$$
M_k=\frac{ 2 \widehat{\kappa}_k}{\sigma_H}  ( \kappa_H+\tau_k ).$$
Since the sequence $\tau_k $ converges to zero, we have 
\begin{eqnarray}
 \limsup_{k \rightarrow \infty } M_k = \frac{ 2 \kappa_H \xi^*}{\sigma_H}
\end{eqnarray}
which implies that
\begin{equation*}
\limsup_{k \rightarrow \infty } \frac{\|\m{x}_{k+1} -\m{x}^*\|}{ \|\m{x}_k-\m{x}^*\|} \leq  \frac{ 2 \kappa_H \xi^*}{\sigma_H}.
\end{equation*} 
Linear convergence take places if $\xi^* \leq \frac{\sigma_H}{2\kappa_H}$, and superlinear convergence holds if $\xi^*=0.$
\end{proof}

\section{Numerical experiments}\label{sec:results}

This section compares the performance of the \NPASA{}, a MATLAB implementation using the \NLCO{} and the cyclic Barzilai--Borwein (CBB) algorithm \cite{dai2006cyclic} for the gradient projection phase, to the performance of \IPOPT{} \cite{wachter2006implementation} and \PASA{} \cite{hager2016active}.

\IPOPT{} is a software package for large-scale nonlinear constrained optimization. It implements a primal-dual interior-point algorithm with a filter line-search method for nonlinear programming and is able to use second derivatives. When the Hessian is indefinite, \IPOPT{} modifies the Hessian as $\m{H}(\m{x}) + \sigma \m{I}$ for some $\sigma>0$. The method to select $\sigma$ is fully described in \cite[Section 3.1]{wachter2006implementation}. In summary, if $\m{H}(\m{x})$ is found to be indefinite, \IPOPT{} sets $\sigma \leftarrow \sigma_0$, where $\sigma_0$ is determined from a user parameter or a previous iteration. If $\m{H}(\m{x})+ \sigma \m{I}$ is found to be indefinite, then $\sigma$ is increased by a factor $\delta$ such that $\sigma \leftarrow \delta \sigma $ and the process is repeated. Between iterations, the initial trial value $\sigma_0$ is decreased. \IPOPT{} does not explicitly compute or use directions of negative curvature. Convergence to second-order critical points is not guaranteed, but often observed in practice.

\PASA{} is a polyhedral active set algorithm for nonlinear optimization of the form \eqref{P}. \PASA{} uses the nonlinear conjugate gradient code CG\_DESCENT \cite{hager2006algorithm} for the linearly constrained optimizer and  the CBB algorithm for the gradient projection phase. The method uses the PPROJ algorithm~\cite{hager2016projection} for projecting a point onto a polyhedron. The \PASA{} algorithm does not use second derivatives and thus cannot guarantee convergence to second-order critical points. However, since it is a descent method, \PASA{} finds a minimizer most of the time.

Both \PASA{}\footnote{\url{http://users.clas.ufl.edu/hager/papers/Software/}}  and  \IPOPT{} \footnote{\url{https://github.com/coin-or/Ipopt}} codes are written in C. We use the mex (MATLAB executable) command in MATLAB to compile C code into a  MATLAB function that can be invoked in a similar fashion to prebuilt functions.  
The default parameters of \PASA{} and \IPOPT{} were used in our implementation.

\NPASA{} uses PReconditioned Iterative MultiMethod Eigensolver (\PRIMME{}) \cite{stathopoulos2010primme} to compute the leftmost eigenvalue of the matrix $\m{H}^{\mc{A}}(\m{x})$ and the corresponding eigenvector, when required. \PRIMME{} is a high-performance library for computing a few eigenvalues/eigenvectors. It is especially optimized for large and difficult problems. It can find smallest eigenvalues, and can use preconditioning to accelerate convergence. PRIMME is written in C, but complete interfaces are provided for MATLAB.  

We use MATLAB's preconditioned conjugate gradient (\texttt{pcg}) method with default tolerance $10^{-6}$ to solve the reduced systems in \NPASA{} and \NPASACG{}. \NPASA{} was stopped declaring convergence when
\begin{eqnarray*}%\label{eqn:stopcod}
\|\mc{P}_\Omega (\m{x}_k - \m{g}(\m{x}_k)) -\m{x}_k \|_{\infty} \leq \epsilon_E,   
%\quad  
%\text{and}
% \quad  \sigma_{\min} (\m{H}^{\mc{I}}(\m{x}_k)) \geq -\epsilon_H. 
\end{eqnarray*}
where $\|\cdot \|_{\infty}$ denotes the sup-norm of a vector.  The specific parameters of \NPASA{} were chosen as
follows
\begin{eqnarray*}
\rho = 0.1, ~ \mu= 0.5,~\eta= 0.5, ~ \epsilon_{E} = 10^{-6}, ~\epsilon_{H}=10^{-4}, ~\delta_1=10^{-4}, ~ \delta_2 = 0.9.
\end{eqnarray*}

\subsection{Hamiltonian cycle problem}

The Hamiltonian cycle problem (HCP) is an important graph theory problem that features prominently in complexity theory because it is NP-complete \cite{karp1972complexity}. The definition of HCP is the following: \textit{Given a graph $\Gamma$ containing $N$ nodes, determine whether any simple cycles of length $N$ exist in the graph}.  These simple cycles of length $N$ are known as Hamiltonian cycles. If $\Gamma$ contains at least one Hamiltonian cycle (HC), we say that $\Gamma$ is a Hamiltonian graph. Otherwise, we say that $\Gamma$ is a non-Hamiltonian graph.

Next, we describe the Newton-type active set algorithm for solving HCP. To achieve this, we define decision variables $x_{ij}$ with each arc $(i,j) \in \Gamma$. Despite the double subscript, we choose to represent these variables in a decision vector $\m{x}$, with each entry corresponding to an arc in $\Gamma$. We then define $\m{P}(\m{x})$, the probability transition matrix that contains the decision variables $x_{ij}$, in matrix format where the $(i, j)$-th entry $P_{ij}({\m{x}})$ is defined as 
\begin{eqnarray}
P_{ij}({\m{x}}) & = & \left\{\begin{array}{lcl}x_{ij}, & & (i,j) \in \Gamma,\\
0, & & \mbox{otherwise.}
\end{array}
\right.\nonumber
\end{eqnarray}

The discrete nature of HCP naturally lends itself to integer programming optimization problems. However, arising from an embedding of HCP in a Markov decision process \cite{filar1994hamiltonian}, continuous optimization problems that are equivalent to HCP have been discovered in recent years. More recently,  
Ejov, Filar, Murray, and Nguyen derived an interesting continuous formulation of HCP \cite{ejov2008determinants}. In particular, it was demonstrated that if we define
\begin{eqnarray*}
\m{F}(\m{P}(\m{x})) & = & \m{I} - \m{P}(\m{x}) + \frac{1}{N}\m{e}\m{e}^\top,
%\label{eq-APx}
\end{eqnarray*}
where $\m{e}$ is a column vector with unit entries, then HCP is equivalent to solving the following optimization problem
\begin{subequations}\label{eq:hcp}
\begin{eqnarray}\label{eq-DS0}
\min  \quad &-&\det(\m{F}(\m{P}(\m{x})))\\
\text{s.t.}\quad     &&\sum\limits_{j \in \mathcal{H}(i)}x_{ij}  = 1, \quad i = 1, \hdots, N,\label{eq-DS1}\\
\qquad \qquad \quad   && \sum\limits_{i \in \mathcal{H}(j)}x_{ij}  =  1, \quad j = 1, \hdots, N,\label{eq-DS2}\\
\qquad \qquad   \qquad  && x_{ij}  \geq  0, \quad\forall (i,j) \in \Gamma,
\label{eq-DS3}
\end{eqnarray}
\end{subequations}
where $\mathcal{H}(i)$ is the set of nodes reachable in one step from node $i$.
Constraints (\ref{eq-DS1})--(\ref{eq-DS3}) are called the {\em doubly-stochastic} constraints. We refer to constraints (\ref{eq-DS1})--(\ref{eq-DS3}) as the set $\Omega$, and we call the objective function $f(\m{P}(\m{x}))=-\det(\m{F}(\m{P}(\m{x})))$. Then, the above problem can be represented as \eqref{P}. As shown in \cite[Corollary 4.7]{ejov2008determinants}, if the graph has an HC, then it can be shown that $\m{x}^*$ is a global minimizer for \eqref{eq:hcp} if $f(\m{P}(\m{x}^*))=-N$.

Filar, Haythorpe, and Murray \cite{filar2013determinant} derived an efficient method to evaluate the first and second derivatives of $f(\m{P}(\m{x}))$. It turns out that using second derivatives and directions of negative curvature is particularly important in finding HCs \cite{filar2013determinant,henderson2012arc}. Haythorpe \cite{haythorpe2010markov} developed a specialized interior point method for \eqref{eq:hcp}, which uses second derivatives and directions of negative curvature.

In this experiment, we compared \NPASA{} with \NPASACG{}, \PASA{} and \IPOPT{} on \eqref{eq:hcp}. \NPASA{}, \NPASACG{} and \IPOPT{} solvers use second derivatives, but only \NPASA{} makes explicit use of directions of negative curvature. \PASA{} algorithm does not use second derivatives but finds a HC most of the time.

\subsubsection{Cubic graphs with 10, 12, and 14 nodes}

Cubic graphs are of interest because they represent difficult instances of HCP
\cite{michael2010}. Variables corresponding to a node with only two edges may be fixed, because the cycle is forced to use each edge. Nodes in a cubic graph all have three edges and there is no possibility for reduction. Adding more edges only increases the likelihood of an HC. Cubic graphs are the most sparse and thus are the least likely to contain HCs in general.

\begin{table}[!t]
\resizebox{.92\textwidth}{!}{
\begin{tabular}{|cccr|l|l|l|l|}
\hline
nodes &   HC & & & \NPASA & \NPASACG &  \PASA & \IPOPT  \\ \hline
10 & 17 & &HCs found &16 (94.1\%) & 15 (88.2\%)  &13 (76.4\%)&12 (70.6\%) \\
&&&  TFE &145& 158& 134 &367\\
&&& AFE &8.5 &9.2  &7.8&21.5 \\
%&&&CPU times &16.2 &84.1 &84.1 \\
\hline
12 & 80 & &HCs found &69 (86.2\%) & 65 (81.25\%) &63 (78.7\%) &61 (76.2\%) \\
&&& TFE & 832& 849 &917 & 2575 \\
&&& AFE &10.4& 10.6 & 11.4 & 32.18\\
%&&&CPU times &16.2 &84.1 &84.1 \\
\hline
14 & 474 & &HCs found &382 (80.5\%)  & 356 (75.1\%) & 330 (69.6\%) & 332 (70.4\%) \\
&&& TFE &7304 &8119& 7819 & 16615 \\
&&& AFE &15.04 &17.12& 19.2 & 35.15\\
%&&&CPU times &22.8 &10.2 & 113.1  \\
\hline
all & 571 && HCs found &468 (81.9\%) &440 (77\%) &409 (71.2\%)& 405 (70.9\%)\\
&&& TFE &8281& 9126 &8870 &19557 \\
&&& AFE &14.5 &15.9 &15.5 & 34.25\\
%&&&CPU times &22.8 &113.1 & 113.1 \\
\hline
\end{tabular}}
\centering
\caption{Summary of results comparing \NPASA{},  \NPASACG{},  \PASA{} and \IPOPT{} on all 10, 12, and 14 node cubic graphs with Hamiltonian cycles. The total function evaluations (TFE) reported includes the runs for which the solver failed to find a HC. The average function evaluations (AFE) are also reported over all runs. The HC column reports the total number of cubic graphs
with HCs.}
\label{tab:9cubic}
\end{table}

The results are summarized in Table \ref{tab:9cubic}. Graphs without HCs were excluded. The test set includes a total of 571 graphs. In all cases solvers were started at feasible point. From this point, \NPASA{} was able to find an HC in 81.9\% (468) of the graphs while \PASA{} and \IPOPT{} solved 71.2\% (409) and 70.9 \% (405), respectively. \NPASA{} required significantly fewer function evaluations as summarized in Table~\ref{tab:9cubic}. Furthermore, \NPASA{} with the CG algorithm (\NPASACG{}) outperforms \PASA{} and \IPOPT{} as it finds an HC in $77\%$ of the graphs. 

\subsubsection{Cubic graphs with 24, 30, and 38 nodes}

\NPASA{}, \NPASACG{}, \PASA{} and \IPOPT{} were also tested on individual cubic graphs with 24, 30, and 38 nodes as well as a 30 node graph of degree 4. The graphs contained HCs and were provided by Haythorpe \cite{michael2010}. HCP becomes more difficult as the number of nodes is increased. 

We performed our experiments using random starting points. For each graph, 20 random feasible starting points were generated in the following manner. First, a vector $\m{z}$ was produced by sampling each element from a uniform distribution over $[0,1]$. The starting points were then selected by solving
 \begin{align*}
\min_\m{x} \quad  \frac{1}{2}\| \m{z}-\m{x}\|_2^2 \quad 
 \text{s.t.} \quad \m{A}\m{x} =\m{e}, ~~\m{x} \geq 0 ,
 \end{align*}
where the linear constraints correspond to those for (\ref{eq-DS1})--(\ref{eq-DS2}). 

Starting from these randomly generated points in the feasible region, \NPASA{} found an HC in 15 (75\%) 30 node graphs of degree 4 while \IPOPT{} was able to find 8(45\%) HCs in all of these graphs when started from the same points. Similarly, \NPASA{} found an HC in 18 (90\%) 38 node graphs of degree 3. However, \IPOPT{} and \PASA{} found 4 (20\%) and 8 (40\%) HCs in these graphs, respectively. The results are summarized in Table~\ref{tab:24cubic} which shows that \NPASA{} and \NPASACG{} significantly outperform their competitors.

\begin{table}[!t]
\resizebox{.92\textwidth}{!}{
\begin{tabular}{|ccr|l|ll|ll|ll|}
\hline
nodes & degree & & \NPASA{} && \NPASACG  &&   \PASA{} & & \IPOPT{} \\ \hline
24  & 3  &  HCs found  & 14 (70\%)    && 14 (70\%) &&    12 (60\%) & & 3 (15\%)\\
&  & TFE   & 298   && 331 &&  312 &&363 \\
&  & AFE   & 14.9   &&16.5 &&  15.6 && 18.15 \\
\hline
30  & 3  &  HCs found  & 14 (70\%)   && 6 (30\%)  &&  7 (35\%) &&  3 (15\%) \\
&  & TFE  & 370   &&  411  && 455 && 1040\\
& & AFE   & 18.5   &&  20.5  &&  22.7 && 52 \\
\hline
38  & 3  &  HCs found  & 18 (90\%)   && 16 (80\%)  && 8 (40\%) && 4 (20\%)\\
&  & TFE  & 907   && 1019 && 861 && 2575\\
&  & AFE   & 45.3   && 50.95  && 43 && 128.7\\
\hline
30  & 4  &  HCs found  & 15 (75\%)   && 13 (65\%)  && 10 (50\%) && 8 (40\%) \\
&  & TFE & 546   && 671 && 712 && 1421 \\
&  & AFE    & 27.3   && 33.55  && 35.6 && 71.05\\ \hline
\end{tabular}}
\centering
\caption{Performance of \NPASA, \NPASACG{}, \PASA{}  and \IPOPT{} on specific 24, 30, and 38 node cubic graphs and a 30 node graph of degree 4. For each graph the solvers were started from 20 randomly generated, feasible starting points. The total function evaluations (TFE) and average function evaluations (AFE) were reported.}
\label{tab:24cubic}
\end{table}

\subsection{The CUTEst problems}

In this section, some experiments are presented for a MATLAB implementation applied to a set of problems from CUTEst \cite{bongartz1995cute,gould2003cuter}. 

This experiment compares algorithms on 55 problems with a nonlinear objective, polyhedral constraints and up to 7564 variables. The number of function evaluations was used for evaluation. 
% for each of the methods are posted at the following website: \url{http://www.math.ufl.edu/∼hager/papers/NPASA}. 
In running the numerical experiments, we checked whether different codes converged to different local minimizers; when comparing the codes, we restricted ourselves to test problems in which all codes converged to the same local minimizer.

The performance of the algorithms was evaluated using the performance profiles of Dolan and Mor\'{e} \cite{dolan2002benchmarking}. Denote the set of solvers by $\mc{S}$ and the set of test problems by $\mc{P}$. The metric $t_{p,s}$ indicates the performance of solver $s \in \mc{S}$ on problem $p \in \mc{P}$. The metric could be running time, number of function evaluations, or solution quality. The best performance on any problem is given by
$t_{p,\min}= \min \{ t_{p,s}:s\in  \mc{S}\}$. The ratio $r_{p,s}= t_{p,s}/ t_{p,\min}$ indicates the relative performance of solver $s$ on $p$. We see $r_{p,s}=1$ if $s$ achieved the best observed performance on $p$ and $r_{p,s}>1$ otherwise. If solver $s$ failed on $p$, then
$r_{p,s}$ can be set to a sufficiently large number. Finally, performance profiles are plots of the function
\begin{align*}
\rho_s (\tau)= \dfrac{|\{p:r_{p,s} \leq  \tau \}|}{|\mc{P}|}
\end{align*}
which is the fraction of problems that $s$ was able to solve with performance ratio at most
$\tau$. The profile functions $\rho_s(\tau) $ are drawn for all
$s \in  \mc{S}$ on the same plot. Solvers with greater area under the
profile curve exhibit better relative performance on the test set.
\begin{figure}[t]
	\centering \makebox[0in]{
\begin{tabular}{c}
\includegraphics[scale=0.18]{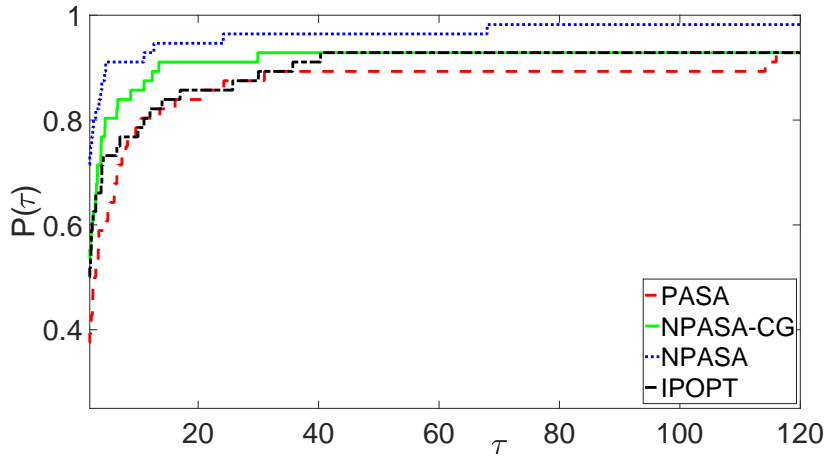}
\end{tabular}
}
\caption{Performance profiles, function evaluation metric, 55 CUTEst problems.}\label{fig:polyfunc}
\end{figure}

Figure~\ref{fig:polyfunc} gives the performance profiles based on the total number of function evaluations required to solve the 55 polyhedral constrained problems. \NPASA{} has the best performance in this metric among the algorithms, as it solves about 70\% of test problems with the least number of function evaluations. Note that \NPASACG{} has the second best performance. These results support the conclusion that \NPASA{} or \NPASA{} with the CG algorithm is preferable to \PASA{} if the Hessian matrix can be obtained explicitly. 
%

%-------------------------------------------------------------------------------
\section{Conclusions}\label{sec:conclusions}

In this paper, we introduced a Newton-type active set algorithm for large-scale minimization subject to polyhedral constraints. The algorithm consists of a gradient-projection step, a modified Newton step in the null space of the constraint matrix, and a set of rules for branching between the two steps. We showed that the proposed method asymptotically takes the modified Newton step when the active constraints are linearly independent and a strong second-order sufficient optimality condition hold. We also showed that the method has a quadratic rate of convergence under standard conditions. 
%-------------------------------------------------------------------------------
%
\section*{Acknowledgments}
The authors are very grateful to Nicholas Wayne Henderson and Walter Murray for making their code of HCP available to us. 
\section*{Appendix}
\subsection*{Proof of Lemma~\ref{lem:sk}}
\begin{proof}
By our assumption $\phi^{'}(0) < 0$, or $\phi^{'}(0) \leq 0$  and $\phi^{''}(0) < 0$. This, together with \eqref{eq:linf} implies that
\begin{equation}\label{eq:psineg}
\psi(s)s = \phi^{'}(0)s+ \frac{1}{2} \min \{ \phi^{''}(0) , 0\} s^2 \leq 0, \quad \text{for all}  \quad s \in S.
\end{equation}
Now, let 
 \begin{equation*}
t = \sup \{s \in S  : \phi (s) \leq \phi(0)\} .
\end{equation*}
From \eqref{eq:psineg} and \eqref{eq:line1}, we get $t > 0$. Further, it follows from  the compactness assumption and the continuity of the function $\phi$ that $ t$ is finite and $\phi(0) = \phi(t)$. Now, from \eqref{eq:linf}, we obtain 
\begin{align}\label{eq:line}
\nonumber
\phi(t) & \geq  \phi(0)+   \delta_1 \big(\phi^{'}(0) t+ \frac{1}{2} \min \{ \phi^{''}(0) , 0\} t^2\big)\\
        & =  \phi(0) + \delta_1 \psi(t)t.
\end{align}
Define $h : S \rightarrow \mathbb{R}$ by  
\begin{equation*}
h(s) =  \phi(s) -\phi(0) - \delta_2 \psi(s)s,
\end{equation*}
where $\psi(s) = \phi^{'}(0) + \frac{1}{2} \min \{ \phi^{''}(0) , 0\} s$.

Since $\delta_1 \leq \delta_2$, we have $h(s) \geq  0$. Note also that $h(0)=0$ and either $h^{'}(0)<0 $, or $h^{'}(0) \leq 0$ and $h^{''}(0) <0$. This, together with the continuity of $h$ implies the existence of $\bar{t} \in (0, t]$ such that 
\begin{equation*}
h(\bar{t}) = h(0)= 0, \quad  \text{and} \quad h(s) < 0  \quad \text{for all}  \quad s \in (0, \bar{t}).
\end{equation*}

Now, it follows from Rolle's theorem that there exists at least one $s$ in the open interval $ (0, \bar{t})$ with $h^{'} (s)=0$, and thus \eqref{eq:line2} holds. Further, $h(s) < 0$ and $\delta_1 \leq \delta_2$ imply \eqref{eq:line1}.
\end{proof}
\bibliographystyle{siamplain}
\bibliography{siam_ref}

\end{document}